\DeclareMathOperator{\ad}{ad}
\DeclareMathOperator{\ev}{ev}
\DeclareMathOperator{\id}{id}
\DeclareMathOperator{\Aut}{Aut}
\DeclareMathOperator{\eq}{eq}
\DeclareMathOperator{\Bun}{Bun}
\DeclareMathOperator{\cBun}{cBun}
\theoremstyle{plain}
\newtheorem{theorem}{Theorem}[section]
\newtheorem*{theorem*}{Theorem}
\newtheorem{proposition}[theorem]{Proposition}
\theoremstyle{definition}
\newtheorem{definition}[theorem]{Definition}
\theoremstyle{remark}
\newtheorem{example}{Example}[section]
\newtheorem{remark}{Remark}[section]
\numberwithin{figure}{section}
\newcommand{\cF}{{\mathcal F}}
\newcommand{\cC}{{\mathcal C}}
\newcommand{\cK}{{\mathcal K}}
\newcommand{\cH}{{\mathcal H}}
\newcommand{\cR}{{\mathcal R}}
\newcommand{\cS}{{\mathcal S}}
\newcommand{\fg}{{\mathfrak g}}
\newcommand{\fk}{{\mathfrak k}}
\newcommand{\RR}{{\mathbb R}}
\newcommand{\ZZ}{{\mathbb Z}}
\renewcommand{\a}{\alpha}
\renewcommand{\b}{\beta}
\renewcommand{\d}{\delta}
\newcommand{\LGS}{LG\rtimes_\rho S^1}
\newcommand{\<}{\langle}
\renewcommand{\>}{\rangle}
\newcommand{\ds}{\displaystyle}
\newcommand{\vp}{\vphantom{{-1}}}
\newcommand{\wt}{\widetilde}
\newcommand{\ol}{\overline}
\begin{document}

\title[Circle actions]{Circle actions, central extensions \\ and string structures}
  \author[M. K. Murray]{Michael K. Murray}
  \address[M. K. Murray]
  {School of Mathematical Sciences\\
  University of Adelaide\\
  Adelaide, SA 5005 \\
  Australia}
  \email{michael.murray@adelaide.edu.au}

 \author[R. F. Vozzo]{Raymond F. Vozzo}
  \address[R. F. Vozzo]
  {School of Mathematical Sciences\\
  University of Adelaide\\
  Adelaide, SA 5005 \\
  Australia}
  \email{raymond.vozzo@adelaide.edu.au}
  \date{\today}

\subjclass[2010]{22E67, 53C08, 81T30}

\thanks{
The authors acknowledge the support of the Australian Research Council and useful discussions with Alan Carey, Danny Stevenson and Mathai Varghese. The second author acknowledges the support of an Australian Postgraduate Research Award.}

\begin{abstract}
The  caloron correspondence can be understood as an equivalence of categories between $G$-bundles over circle bundles and $LG \rtimes_\rho S^1$-bundles where $LG$ is the group of smooth loops in $G$.  We use it, and lifting  bundle gerbes, to derive an explicit differential form  based formula for the (real) string class of an 
$LG \rtimes_\rho S^1$-bundle. 
\end{abstract}

\maketitle

\tableofcontents

\section{Introduction}

The caloron correspondence was first introduced in \cite{Garland:1988} as a bijection between
isomorphism classes of $G$-instantons on $\RR^3 \times S^1$ (calorons) and $\Omega G$-monopoles on $\RR^3$, where
$\Omega G$ is the group of based loops in $G$.  The motivation in that case  was the study of monopoles for 
loop groups, in particular, their twistor theory. It was subsequently \cite{Garland:1989} applied to the  case of instantons 
on the four-sphere and the four-sphere minus a two-sphere and loop group monopoles on hyperbolic three-space.
The motivation for the present work however was   \cite{Murray:2003}, which used the caloron 
correspondence to relate string structures on loop group bundles and the Pontrjagyn class of $G$-bundles.
In particular it calculated an explicit de Rham representative for Killingback's string class \cite{Killingback:1987} using
bundle gerbes.  In \cite{MurVoz} we followed a similar approach   to define higher classes
of $\Omega G$-bundles which we called string classes and discussed their properties.   The present work generalises 
\cite{Murray:2003} in a different direction replacing the space $M \times S^1$ by a principal $S^1$-bundle 
$Y \to M$ and deriving a formula for the image  in de Rham cohomology of the string class of an $LG \rtimes_\rho S^1$-bundle $P \to M$ where
$LG$ is the space of free loops in $G$ and $LG \rtimes_{\rho} S^1$ the semi-direct product where the circle
acts by rotating the loop.

We begin by discussing the caloron correspondence in this broader context. 
This was first done in  \cite{Bergman:2005} and  later in \cite{Bouwknegt:2009} in a string theory
context.  It is a correspondence between 
$G$-bundles $\widetilde P \to Y$ over a circle bundle $Y \to M$ and $LG \rtimes_\rho S^1$-bundles $P \to M$.  It proves useful to introduce an intermediate step which are  $\rho$-equivariant $LG$-bundles $\overline P \to Y$.   We also consider the action of the caloron correspondence on connections which necessitates the introduction of Higgs fields.  

With the caloron correspondence complete we review  basic material on lifting problems for principal bundles and then 
introduce the central extension of $LG \rtimes_\rho S^1$ proving that it must have the form $\widehat{LG} \rtimes_{\hat\rho} S^1$
where $\widehat{LG} \to LG$ is the Kac-Moody central extension of $LG$. 

The final sections review the notion of a lifting bundle gerbe and extend the results of  \cite{Murray:2003}
to the case of the string class of an  $LG \rtimes_\rho S^1$-bundle.   Our central results are Theorem 
\ref{T:LGxS^1string} and \ref{T:LGxS^1Pont}:

\begin{theorem*}
Let $P \to M$ be a principal $LG\rtimes_\rho S^1$-bundle and let $\Phi$ be a Higgs field for $P$ and $(A,a)$ be a connection for $P$ with curvature $(F,f).$ Then the real string class of $P$, is represented in de Rham cohomology by
$$
-\frac{1}{4\pi^2}\int_{S^1} \langle F+ f\Phi, \nabla\Phi \rangle \,d\theta,
$$
where
$$
\nabla\Phi = d\Phi + [A,\Phi] - \partial A - a\partial \Phi.
$$
\end{theorem*}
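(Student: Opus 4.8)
The plan is to represent the string class by the three-curvature of the lifting bundle gerbe that defines it, carrying out the computation on the $G$-bundle side of the caloron correspondence where it reduces to Chern--Weil theory. By construction the real string class of $P$ is the image in de Rham cohomology of the Dixmier--Douady class of the lifting bundle gerbe of $U(1)\to\widehat{LG}\rtimes_{\hat\rho}S^1\to\LGS$. The caloron correspondence, together with its action on connections and Higgs fields established above, turns $(P,A,a,\Phi)$ into a $G$-bundle $\wt P\to Y$ over the circle bundle $Y\to M$ equipped with an induced connection; I would compute the three-curvature there by representing the first Pontryagin form of $\wt P$ as $-\tfrac{1}{8\pi^2}\langle\wt F\wedge\wt F\rangle$ and integrating over the $S^1$-fibre of $Y\to M$. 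This should simultaneously yield the companion statement, Theorem~\ref{T:LGxS^1Pont}, that the string class is the fibre integral of $p_1(\wt P)$.

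First I would record the structure equations. Writing $\partial$ for the infinitesimal generator $\partial_\theta$ of the rotation $\rho$, the curvature of $(A,a)$ is $F=dA+\tfrac12[A,A]+a\wedge\partial A$ and $f=da$, and I would note the attendant Bianchi identities. The central step is then to compute the curvature $\wt F$ of the induced connection on $\wt P$ in terms of $(A,a,\Phi)$. Using a local fibre one-form $\lambda$ along $Y\to M$, whose component of the connection is carried by the Higgs field $\Phi$, I expect the decomposition $\wt F=(F+f\Phi)+\nabla\Phi\wedge\lambda$, where $\nabla\Phi=d\Phi+[A,\Phi]-\partial A-a\,\partial\Phi$ is exactly the expression in the statement. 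The term $-\partial A$ comes from differentiating the loop-parameter dependence of $A$ in the fibre direction, while $-a\,\partial\Phi$ comes from the semidirect factor; both are the genuinely new features beyond the $M\times S^1$ situation of \cite{Murray:2003}.

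Granting this decomposition, the formula drops out. Because $\lambda\wedge\lambda=0$, the only part of $\langle\wt F\wedge\wt F\rangle$ surviving fibre integration is the cross term, so $\int_{S^1}\langle\wt F\wedge\wt F\rangle=2\int_{S^1}\langle F+f\Phi,\nabla\Phi\rangle\,d\theta$; the factor of two turns $\tfrac{1}{8\pi^2}$ into $\tfrac{1}{4\pi^2}$ and gives the stated representative. As a check independent of fibre integration, and to keep the argument entirely within the gerbe framework, I would run the parallel computation directly on the lifting bundle gerbe: choose a connection on the central extension whose curvature is the Kac--Moody cocycle $\omega(\xi,\eta)=\tfrac{1}{2\pi}\int_{S^1}\langle\xi,\partial\eta\rangle\,d\theta$ extended to $\Lgs$, pull it back to a bundle gerbe connection, use $\Phi$ to write a curving $B\in\Omega^2(P)$ with $\delta(B)$ its curvature, and verify that $dB$ descends to the same three-form.

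The hardest part is the curvature decomposition of $\wt F$: the $\lambda$-component $\nabla\Phi$ appears readily, but showing the horizontal part is precisely $F+f\Phi$, with $F$ carrying the semidirect correction $a\wedge\partial A$, requires the explicit caloron connection and care with the horizontal distribution on $\wt P$. The remaining subtlety is the descent of $dB$ to $M$, equivalently the independence of the de Rham class from the gauge and from the choice of $\Phi$; this rests on integration by parts in the loop variable, using that $\partial$ is a derivation of both $[\,\cdot\,,\,\cdot\,]$ and $\langle\,\cdot\,,\,\cdot\,\rangle$ so that $\int_{S^1}\langle\xi,\partial\eta\rangle\,d\theta=-\int_{S^1}\langle\partial\xi,\eta\rangle\,d\theta$, and it is exactly this bookkeeping that assembles the scattered $a$- and $\partial$-terms into the covariant pairing $\langle F+f\Phi,\nabla\Phi\rangle$.
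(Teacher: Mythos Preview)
Your proposal inverts the logical order of the paper and, as written, leaves a genuine gap. The paper proves the theorem by working \emph{directly} with the lifting bundle gerbe: it pulls back a connection $\nu$ on $\widehat{LG}\rtimes_{\hat\rho}S^1\to\LGS$ along the difference map $\tau$, corrects it by an explicit $1$-form $\epsilon$ built from $\alpha$ and $(A,a)$ to obtain a bundle gerbe connection, computes $\tau^*R-d\epsilon$, and then uses the Higgs field to exhibit an explicit curving
\[
B=\frac{i}{4\pi}\int_{S^1}\langle A,\partial A\rangle-2\langle F+\tfrac12 f\Phi,\Phi\rangle\,d\theta
\]
whose exterior derivative descends to $M$ and gives the stated $3$-form. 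Only \emph{after} this is established does the paper turn to the Pontrjagyn side and show, as a separate theorem, that $\int_{S^1}p_1(\wt P)$ reproduces the same expression. Your primary route assumes the identity $s(P)=\int_{S^1}p_1(\wt P)$ in order to compute $s(P)$, but nothing in the caloron correspondence as developed here tells you a priori that the Dixmier--Douady class of the lifting gerbe on $M$ equals the fibre integral of $p_1$; that is precisely the content of the companion theorem, proved by comparison with the gerbe calculation.

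What you call a ``check'' --- the direct gerbe computation --- is therefore the actual proof, and it is where all the work lies: finding $\epsilon$ with $\delta\epsilon=\tau^*\alpha$, then finding $B$ with $\delta B=\tau^*R-d\epsilon$. You sketch neither. Your curvature decomposition $\wt F=\ad(g^{-1})\bigl((F+f\Phi)+\nabla\Phi\wedge(a+d\theta)\bigr)$ is correct and matches the paper, so the $p_1$ computation would go through, but it only yields the theorem once the gerbe side is done. Note also a sign: the paper has $F=dA+\tfrac12[A,A]-a\wedge\partial A$, not $+a\wedge\partial A$; your convention for $\partial$ may differ, but this must be reconciled for the cross-terms to assemble correctly.
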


\begin{theorem*}
Let $P \to M$ be a principal $LG\rtimes_\rho S^1$-bundle and $\wt P \to Y \to M$ be the corresponding $G$-bundle over an $S^1$-bundle. Then the real string class of $P$ is given by the integration over the fibre of the first Pontrjagyn class of $\wt P.$ That is,
$$
s(P) = \int_{S^1} p_1 (\wt P).
$$
\end{theorem*}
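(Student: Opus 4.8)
The plan is to show that the fibre integral of the Chern--Weil representative of $p_1(\wt P)$ is precisely the de Rham representative of $s(P)$ produced in Theorem \ref{T:LGxS^1string}; since both are closed forms on $M$, the equality of cohomology classes then follows. To begin, I would equip $\wt P\to Y$ with the $G$-connection $\wt A$ that corresponds, under the caloron correspondence for connections developed above, to the connection $(A,a)$ and the Higgs field $\Phi$ on $P$. Writing $\wt F$ for its curvature, Chern--Weil theory represents the first Pontryagin class by
$$
p_1(\wt P)=\left[-\frac{1}{8\pi^2}\,\<\wt F\wedge\wt F\>\right],
$$
a four-form on the total space $Y$.

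The central computation is to express $\wt F$ through the caloron dictionary and isolate the piece that survives fibre integration. Writing $\partial$ for the derivative along the loop/fibre circle and splitting $\wt F$ according to the horizontal--vertical decomposition of $Y\to M$ induced by $a$, I expect the horizontal component to be $F+f\Phi$ and the coefficient of the vertical (connection) one-form to be $\nabla\Phi=d\Phi+[A,\Phi]-\partial A-a\,\partial\Phi$. Here the exterior derivative $d\wt A$ produces the $d\Phi$ and $\partial A$ contributions, the quadratic term $\tfrac12[\wt A,\wt A]$ produces $[A,\Phi]$, and the interplay of the circle-bundle curvature $f$ with the semidirect rotation produces the $f\Phi$ and $a\,\partial\Phi$ contributions; the precise signs are fixed by the conventions of the caloron dictionary.

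With this decomposition in hand, integrating $\<\wt F\wedge\wt F\>$ over the fibre annihilates the pure-horizontal and pure-vertical terms and leaves twice the cross term, giving
$$
\int_{S^1}\<\wt F\wedge\wt F\>=2\int_{S^1}\<F+f\Phi,\nabla\Phi\>\,d\theta.
$$
Multiplying by $-\tfrac{1}{8\pi^2}$ yields exactly the representative $-\tfrac{1}{4\pi^2}\int_{S^1}\<F+f\Phi,\nabla\Phi\>\,d\theta$ of Theorem \ref{T:LGxS^1string}. Because the fibre is closed, fibre integration commutes with the de Rham differential, so both sides are closed and represent the same class in $H^3(M)$, proving $s(P)=\int_{S^1}p_1(\wt P)$.

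The main obstacle will be the second step: carrying out the caloron translation of $\wt F$ carefully enough to confirm that its vertical coefficient is exactly $\nabla\Phi$ and its horizontal part is exactly $F+f\Phi$. This is precisely where the nontriviality of the circle bundle $Y\to M$ enters and the argument departs from the product case of \cite{Murray:2003}: one must track how the connection $a$ and its curvature $f$ couple to the loop derivative $\partial$, since it is exactly this coupling that produces the terms $-a\,\partial\Phi$ and $f\Phi$, both of which vanish when $Y=M\times S^1$.
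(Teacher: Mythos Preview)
Your proposal is correct and follows essentially the same route as the paper's proof: both compute the curvature of the caloron-transferred connection on $\wt P$ as $\wt F=\ad(g^{-1})\bigl(F+f\Phi+\nabla\Phi\wedge(a+d\theta)\bigr)$, identify $F+f\Phi$ as the horizontal part and $\nabla\Phi$ as the coefficient of the vertical form, and then fibre-integrate $-\tfrac{1}{8\pi^2}\langle\wt F,\wt F\rangle$ so that only the cross term survives, yielding the representative of Theorem~\ref{T:LGxS^1string}. The paper supplies the explicit curvature formula (obtained from Proposition~\ref{P:caloron connection}) that you flag as the ``main obstacle'', so your outline would be completed exactly as you anticipate.
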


Throughout this paper, $G$ will be a compact,  connected Lie group and all  cohomology groups will use real coefficients.

\section{The caloron correspondence for $LG \rtimes_\rho S^1$-bundles.}\label{S:caloron}

Recall that in \cite{MurVoz} we explained the caloron correspondence between $\Omega G$-bundles on a manifold $M$ and certain framed $G$-bundles on $M \times S^1$. We want to extend this to the case of $LG$-bundles and also replace $M \times S^1$ by a circle bundle $Y  \to M$.  To do this it is useful to first introduce a correspondence between semi-direct product bundles.

\subsection{Semi-direct correspondence}
 We need some results on equivariant bundles but first we need to make 
conventions for left and right circle actions.  One circle action we will be interested in will be that  on a right principal $S^1$-bundle.  However we will also want to 
consider actions of the circle on Lie groups $K$ arising from automorphisms $\rho \colon S^1 \to \Aut(K)$. 
In this case the natural thing to do is to make the action on the left.  So our convention will be that
groups are acted on on the left via homomorphisms to the automorphism group and spaces are acted on on the 
right. We then have

\begin{definition}
Let $\overline P \to M$ be a $K$-bundle and   $\rho \colon S^1 \to \Aut(K)$  a homomorphism. 
We say that $\overline P \to M$ is a \emph{$\rho$-equivariant bundle}
if $S^1$ acts on $\overline P$ covering an action on $M$ such that for all $p \in \overline P$,  $k \in K$ and $\theta \in S^1$ we have 
$R_\theta(pk) = R_\theta(p)\rho_{-\theta}(k)$.  Here $p \mapsto R_\theta(p)$ denotes the action 
of $\theta$ on $p$ and $k \mapsto \rho_\theta(k)$ the action of $\theta$ on $k$.
\end{definition}

\begin{remark}
As we will see in the next example the sign change here is a result of mixing left and right actions.
\end{remark} 

\begin{example}
\label{ex:loop_bundles}
If $X$ is a space let $LX $ be the space of all smooth maps  $\gamma \colon S^1 \to X$.  We make $S^1$ act on $LX$ by defining 
$R_{\theta}(\gamma)(\phi) = \gamma(\phi + \theta)$.  If $P \to M$ is a $G$-bundle we can form the $LG$-bundle $LP \to LM$ and 
note that this is a $\rho$-equivariant $LG$-bundle where $\rho \colon S^1 \to \Aut(LG)$ is the action $\rho_\theta(g)(\phi) = g(\phi - \theta)$. We call this a {\em loop bundle}.
\end{example}

\begin{example} 
The discussion above defines an action $\rho \colon S^1 \to LG$ by $\rho_\theta(g)(\phi) = g(\phi + \theta)$. 
There is a natural notion of a homomorphism of groups with actions of the circle on them. In particular if $\rho \colon S^1 \to G$ is an action consider the map $\hat\rho \colon G \to LG$ defined by $\hat\rho(g)(\phi) = \rho_{\phi}(g)$.
Then this is a homomorphism of groups and in fact a homomorphism of groups with circle actions as
$$
\hat\rho( \rho_\theta(g))(\phi) = \rho_\phi( \rho_\theta (g)) = \rho_{\theta+\phi}(g) = 
\hat\rho(g)(\theta + \phi) = \rho_\theta( \hat\rho(g))(\phi).
$$
\end{example}

Recall that if $S^1$ acts on a group $K$ we form the semi-direct product $K \rtimes_\rho S^1 = K \times S^1$ with the product
$$
(k, \theta)(h , \phi) = (k\rho_{\theta}(h), \theta + \phi).
$$
Note that there is a short exact sequence
$$
1 \to K \to K \rtimes_\rho S^1 \to S^1 \to 1
$$
so that if we have a principal $K \rtimes_\rho S^1$-bundle $P \to M$ it induces an $S^1$-bundle $P(S^1) \to M$.

\begin{proposition}[Semi-direct correspondence]
\label{prop:equiv-semi}
Let $Y \to M$ be an $S^1$-bundle and $\rho \colon S^1 \to \Aut(K)$ a homomorphism. Then there is a bijective correspondence between
\begin{enumerate}
\item Principal $K$-bundles $\overline P \to Y$ which are $\rho$-equivariant for the circle 
 action on $Y$, and
\item Principal $K \rtimes_\rho S^1$-bundles $P \to M$ with a circle bundle isomorphism from $P(S^1)$  to $Y$.
\end{enumerate}
\end{proposition}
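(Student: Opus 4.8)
The plan is to construct explicit maps in both directions and verify that they are mutually inverse, with the $\rho$-equivariance condition---and in particular its sign---being exactly what turns one of the two constructions into a genuine group action.

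\textbf{From (2) to (1).} Given a principal $K \rtimes_\rho S^1$-bundle $P \to M$ together with an isomorphism $P(S^1) \cong Y$, I would first note that $P(S^1) = P/K$ since $K$ is the kernel of $K \rtimes_\rho S^1 \to S^1$. Restricting the structure group action to the subgroup $K \times \{0\}$ then exhibits $P \to P/K \cong Y$ as a principal $K$-bundle. For the circle action I would use the subgroup $\{e\} \times S^1 \subseteq K \rtimes_\rho S^1$, which is an honest subgroup because $\rho_\theta(e) = e$, and set $R_\theta(p) = p(e,\theta)$; this covers the circle action on $Y = P(S^1)$ by construction. The $\rho$-equivariance condition is then a short computation in the semi-direct product: using $(e,\theta)(h,0) = (\rho_\theta(h),\theta)$ one finds $R_\theta(pk) = p(k,0)(e,\theta) = p(k,\theta)$ and $R_\theta(p)\rho_{-\theta}(k) = p(e,\theta)(\rho_{-\theta}(k),0) = p(k,\theta)$ agree.

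\textbf{From (1) to (2).} Conversely, given a $\rho$-equivariant $K$-bundle $\overline P \to Y$ with circle action $R$, I would take $P = \overline P$ as a space over $M = Y/S^1$ and define a right $K \rtimes_\rho S^1$-action by
$$
p \cdot (k,\theta) = R_\theta(pk).
$$
The key step is to verify this is a right action, namely $(p\cdot(k,\theta))\cdot(h,\phi) = p\cdot\big((k,\theta)(h,\phi)\big)$; expanding both sides and repeatedly applying the identity $R_\psi(qj) = R_\psi(q)\rho_{-\psi}(j)$ reduces each to $R_{\theta+\phi}(p)\,\rho_{-\theta-\phi}(k)\,\rho_{-\phi}(h)$, and it is precisely here that the sign in the equivariance condition is needed. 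I would then check freeness (if $R_\theta(pk)=p$, projecting to $Y$ and using freeness of the circle action on $Y$ forces $\theta=0$, whence $pk=p$ and so $k=e$), that the orbit space is $M$ (quotient by $K$ gives $Y$, then by the residual $S^1$ gives $M$), and local triviality (combining local trivialisations of $\overline P \to Y$ and $Y \to M$). Finally $P(S^1) = P/K = \overline P/K = Y$ furnishes the required circle bundle isomorphism.

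\textbf{Mutual inverse and obstacle.} To see the constructions are inverse I would simply trace the definitions. Running (2)$\to$(1)$\to$(2) on $P$ recovers the action $p\cdot(k,\theta) = R_\theta(pk) = p(k,0)(e,\theta) = p(k,\theta)$, the original one; running (1)$\to$(2)$\to$(1) on $\overline P$ recovers the $K$-action $p\cdot(k,0)=R_0(pk)=pk$ and the circle action $p\cdot(e,\theta)=R_\theta(p)$ over $P/K = Y$, again the original structures. I expect the only genuinely delicate point to be establishing smoothness and local triviality of $P \to M$ in the (1)$\to$(2) direction; the algebraic check that $p\cdot(k,\theta)=R_\theta(pk)$ is an action is routine once the sign in the $\rho$-equivariance condition is carried carefully, and it is really the conceptual heart of the correspondence.
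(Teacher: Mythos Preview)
Your proposal is correct and follows essentially the same approach as the paper: both directions use exactly the formulas $p\cdot(k,\theta)=R_\theta(pk)$ and $R_\theta(p)=p(e,\theta)$, $pk=p(k,0)$, with the same verification that the semi-direct product action is a right action reducing to $R_{\theta+\phi}(p)\,\rho_{-\theta-\phi}(k)\,\rho_{-\phi}(h)$. The only cosmetic difference is that the paper handles the circle-bundle identification a touch more carefully, writing $P(S^1)$ as the associated bundle $\overline P\times_{K\rtimes S^1}S^1$ and building an explicit $S^1$-equivariant map $\chi$ to $Y$ (then pulling back by $\chi^{-1}$ in the reverse direction), whereas you identify $P/K$ with $Y$ directly via the given isomorphism; this is the same content.
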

\begin{proof}
Let $\overline P \to Y$ be $\rho$-equivariant so that we have a circle action on $\overline P$ covering the principal bundle
circle action on $Y$ such that $R_\theta(pk) = R_\theta(p)\rho_{-\theta}(k)$. Define an action 
of $K \rtimes_\rho S^1$ on the right of $\overline P$ by $p(k, \theta) = R_\theta(pk)$. Then we have
\begin{align*}
(p(k, \theta))(h, \phi) &= (R_\theta(pk))(h, \phi) \\
                         &=R_\phi(R_\theta(pk)h) \\
                         &= R_{\theta + \phi}(pk)\rho_{-\phi}(h)\\
                         &= R_{\theta+\phi}(p)\rho_{-\theta-\phi}(k) \rho_{-\phi}(h)\\
                         \end{align*}
                         and thus
\begin{align*} p( (k, \theta)(h, \phi) ) &= p (k\rho_\theta(h), \theta + \phi))\\
                                       &= R_{\theta+\phi}(p k \rho_\theta(h)) \\
                                       &= R_{\theta + \phi}(p) \rho_{-\theta - \phi}(k) \rho_{-\theta - \phi}(\rho_{\theta}(h))\\
                                       &= R_{\theta + \phi}(p) \rho_{-\theta - \phi}(k) \rho_{-\phi}(h)\\&= (p(k, \theta))(h, \phi) 
                                       \end{align*}
  so this is a right action.  Local triviality and freeness of the action are straightforward to check as is the fact that 
  the orbits are the fibres of the composed map $\overline P \to M$ making $\overline P$ a $K \rtimes_\rho S^1$-bundle over $M$.     
  The induced $S^1$-bundle is the bundle $\overline P \times_{K \rtimes S^1} S^1$ where equivalence 
  classes satisfy  $[p, \phi] = [p (k, \theta), - \phi + \theta]$.    Let $\pi \colon \overline P \to Y$ be the 
  projection and define $\chi \colon   \overline P \times_{G \rtimes S^1} S^1 \to Y$ by $\chi([p, \theta]) = \pi(\rho_\theta(p)) = \rho_\theta(\pi(p))$. 
  Then $\chi(\rho_\phi([p, \theta]) ) = \chi([p , \theta + \phi]) = \rho_{\phi+\theta}(\pi(p)) = \rho_\phi(\rho_\theta(\pi(p))) = 
  \rho_\phi(\chi([p, \theta]))$. So $\chi $ is an $S^1$-bundle isomorphism. 
                      
   Consider the converse. Define an action of $K$ on $P$ by composing with the projection $K \to K \rtimes S^1$,
   that is $p k = p(k, 0)$. It is straightforward to check that these are the fibres of the projection 
   $P \to P \times_{K \rtimes S^1} S^1$. Define a circle action on $P$ by $R_\theta(p) = p(1, \theta)$.
   We have 
 \begin{align*}
         R_\theta(p k) & = p(k, 0)( 1, \theta) \\
         &= p(1, \theta)(1, -\theta) (k, 0)(1, \theta) \\
         & = R_\theta(p) (\rho_{-\theta}(k), 0 ) \\
         & = R_\theta(p)\rho_{-\theta}(k)
         \end{align*}                  
    so that $P $ is a $\rho$-equivariant $K$-bundle $P \to P \times_{K \rtimes S^1} S^1$. Now just pull $P$ back by the inverse of the 
    $S^1$-equivariant isomorphism $\chi \colon  P \times_{K \rtimes S^1} S^1 \to Y$.                          
\end{proof}

\begin{remark} 
\label{rem:reduced} 
Notice that if $P \to M$ is a $K \rtimes_\rho S^1$-bundle over $M$ and 
$P(S^1)$ is trivial we can consider the subset of $P$ of all elements mapping to the image of the 
trivialisation. It is easy to see that this is a reduction of $P$ to $K$. Conversely if
$\overline P \to M$ is a $K$-bundle and we induce a $K \rtimes S^1$-bundle $P = \overline P \times_K (K \rtimes S^1)$ then 
it has $P(S^1)$ trivial and its  reduction to $K$ is naturally isomorphic to $\overline P$ via the map 
$p \mapsto [p, (1, 0)]$. 
\end{remark}

Denote by $\Bun_K^\rho/\Bun_{S^1}$ the category whose objects are triples $(\overline P, Y, M)$ 
where $M$ is a manifold, $Y \to M$  an $S^1$-bundle and  $\overline P \to Y$ a $\rho$-equivariant bundle and whose morphisms from $(\overline P, Y, M)$ to $(\overline P', Y', M')$ are triples $(f, g, h) $ where $g \colon Y \to Y'$ is an $S^1$-bundle map covering $h \colon M \to M'$ and 
$f \colon \overline P \to \overline P'$ is a $K$-bundle map covering $g $ and commuting with the circle action. 
Also denote by $\Bun_{K \rtimes_\rho S^1}$ the category of $K \rtimes_\rho S^1$-bundles
$P \to M$ with bundle maps as morphisms.  In both cases we have functors 
$$
\Pi_{S^1} \colon \Bun_K^\rho/\Bun_{S^1}  \to \Bun_{S^1} 
$$
and 
$$
\Pi_{S^1} \colon \Bun_{K \rtimes_\rho S^1}  \to \Bun_{S^1} 
$$
to the category of $S^1$-bundles. The first is the obvious projection and the 
second is the functor $\Pi_{S^1} (P) = P (S^1)$.  What we have described in the 
proposition above are two functors
$$
\cF \colon \Bun_K^\rho/\Bun_{S^1} \to \Bun_{K \rtimes_\rho S^1} 
$$
and 
$$
\cK \colon \Bun_{K \rtimes_\rho S^1}  \to  \Bun_K^\rho/\Bun_{S^1} 
$$
which in fact are pseudo-inverses. This means that there are natural isomorphisms
$$
\cK \circ \cF  \simeq \id_{\Bun_K^\rho/\Bun_{S^1}} \quad\text{and}\quad \cF \circ \cK \simeq \id_{\Bun_{K \rtimes_\rho S^1}} .
$$
so we have

\begin{proposition}
The semi-direct product correspondence defines an equivalence of categories
$$
\Bun_K^\rho/\Bun_{S^1} \to \Bun_{K \rtimes_\rho S^1}
$$
\end{proposition}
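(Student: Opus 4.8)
The plan is to verify the three things that upgrade Proposition \ref{prop:equiv-semi} from a bijection on objects to an equivalence of categories: that $\cF$ and $\cK$ are genuine functors, and that the object-level bijections assemble into natural isomorphisms $\cK\circ\cF \simeq \id$ and $\cF\circ\cK\simeq\id$. Since the underlying total space is left unchanged by either functor, most of the work is bookkeeping, and the only real content is the naturality of the identification $P(S^1)\cong Y$.

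First I would check functoriality. For $\cF$, a morphism $(f,g,h)$ of $\rho$-equivariant bundles has an underlying map $f$ that is $K$-equivariant and commutes with the circle action; using the formula $p(k,\theta)=R_\theta(pk)$ from the proof of Proposition \ref{prop:equiv-semi} one computes
$$
f(p(k,\theta)) = f(R_\theta(pk)) = R_\theta(f(p)k) = f(p)(k,\theta),
$$
so $f$ is automatically a $K\rtimes_\rho S^1$-bundle map, and this assignment respects composition and identities on the nose. For $\cK$, a bundle map $\psi\colon P\to P'$ satisfies $\psi(pk)=\psi(p)k$ and $\psi(R_\theta(p))=R_\theta(\psi(p))$ because $pk=p(k,0)$ and $R_\theta(p)=p(1,\theta)$, and it induces a map $\psi(S^1)\colon P(S^1)\to P'(S^1)$ on the associated $S^1$-bundles; functoriality of the construction $P\mapsto P(S^1)$ then gives $\cK(\psi\circ\psi')=\cK(\psi)\circ\cK(\psi')$ together with $\cK(\id)=\id$.

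Next I would build the natural isomorphisms. For $\cK\circ\cF$, the component at $(\overline P, Y, M)$ is the triple $(\id_{\overline P}, \chi, \id_M)$, where $\chi\colon \overline P\times_{K\rtimes S^1}S^1\to Y$ is the $S^1$-bundle isomorphism produced in the proof of Proposition \ref{prop:equiv-semi}; since $\id_{\overline P}$ covers $\chi$ and intertwines the $K$-actions and the circle actions, this is an isomorphism in $\Bun_K^\rho/\Bun_{S^1}$. The component for $\cF\circ\cK\simeq\id$ is, after the identification furnished by $\chi^{-1}$, essentially the identity on $P$, compatibly with Remark \ref{rem:reduced}. I expect the main obstacle to be checking that these components are natural, i.e. that for every morphism the corresponding square commutes; concretely this reduces to the compatibility of $\chi$ with the induced map $\psi(S^1)$, which is exactly the statement that the associated-bundle construction $P\mapsto P(S^1)$ and the formula defining $\chi$ are themselves functorial. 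Once that compatibility is established, no triangle identities are required separately, since for a mere equivalence the two natural isomorphisms already exhibit $\cK$ as a pseudo-inverse to $\cF$.
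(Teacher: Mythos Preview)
Your proposal is correct and follows exactly the approach the paper takes: the paper simply asserts, without writing out a proof, that the constructions of Proposition~\ref{prop:equiv-semi} furnish pseudo-inverse functors $\cF$ and $\cK$, and you have supplied the routine verifications of functoriality and naturality that the paper leaves implicit. One small quibble: your appeal to Remark~\ref{rem:reduced} is not quite apposite (that remark concerns the special case where $P(S^1)$ is trivial), and in fact $\cF\circ\cK(P)$ is literally $P$ with its original $K\rtimes_\rho S^1$-action, so no identification via $\chi^{-1}$ is needed there.
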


\subsection{Caloron correspondence}
If $Y \to M$ is an $S^1$-bundle denote by $L_{\eq}Y \subset LY$ those loops $f \colon S^1 \to Y$ which are $S^1$-equivariant in the sense that $ f(\theta + \phi) = R_\phi(f(\theta))$.
Define $\eta \colon  Y \to LY$ by $\eta(y)(\phi) = R_\phi(y)$ and note that 
$\eta(y)(\theta + \phi) = 
R_{\theta+\phi}(y) = R_\phi( R_\theta(y)) = R_\phi(\eta(y)(\theta))$.
Notice also that $\eta$ is equivariant in the sense that $\eta(R_\theta(y)) = R_\theta(\eta(y))$.
In fact we have $\eta(R_\theta(y))(\phi) = R_\phi(R_\theta(y)) = R_{\theta+\phi}(y) = \eta(y)(\theta+ \phi)
= R_\theta(\eta(y))(\phi)$.

Consider now a $G$-bundle $\widetilde P \to Y$ where $Y \to M$ is an $S^1$-bundle. We can then form the $\rho$-equivariant $LG$-bundle $L\widetilde P \to LY$.
If we pull this back with $\eta$ to $\eta^*(L\widetilde P) \to Y$ we obtain a $\rho$-equivariant $LG$-bundle over $Y$.  It is easy to check that this defines a functor
$$
\cR \colon \Bun_G/\Bun_{S^1} \to \Bun_{LG}^\rho/\Bun_{S^1}.
$$

This functor is also an equivalence which we can see by constructing a pseudo-inverse
$$
\cS \colon  \Bun_{LG}^\rho/\Bun_{S^1}  \to \Bun_G/\Bun_{S^1}
$$
If $\overline P \to Y$ is a $\rho$-equivariant $LG$-bundle we can use the evaluation 
homomorphism $\ev_0 \colon LG \to G$, whose kernel is the based loop group $\Omega G$, to form the associated $G$-bundle  $\overline P \times_{LG} G = \overline P  / \Omega G \to Y$ which is $\cS(\overline P)$. 

\begin{proposition}
\label{prop:pseudo-inverses}
The functors $\cR$ and $\cS$ are pseudo-inverses of each other.
\end{proposition}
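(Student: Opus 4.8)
The plan is to exhibit explicit natural isomorphisms $\cS\circ\cR\simeq\id$ and $\cR\circ\cS\simeq\id$; since both $\cR$ and $\cS$ cover the identity on the underlying $S^1$-bundle $Y$, at each object it will suffice to produce an $LG$- (resp.\ $G$-) equivariant bundle map covering $\id_Y$ and then use that such a map between principal bundles is automatically an isomorphism. The two identities to keep in hand throughout are $\ev_0\circ\eta=\id_Y$ (immediate from $\eta(y)(0)=R_0(y)=y$) and $\ev_0(\rho_{-\phi}(k))=k(\phi)$, which follows from the convention $\rho_\theta(g)(\psi)=g(\psi-\theta)$ fixed in Example~\ref{ex:loop_bundles}.

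For $\cS\circ\cR$, begin with a $G$-bundle $\wt P\to Y$, so that $\cS(\cR(\wt P))=\eta^*(L\wt P)/\Omega G$. Since passing to the associated bundle along $\ev_0\colon LG\to G$ is the same as quotienting by the kernel $\Omega G$, and since both the loop functor and this quotient commute with pullback, I would first reduce to the single key identification $L\wt P/\Omega G\cong\ev_0^*\wt P$ of $G$-bundles over $LY$, given by $[\sigma]\mapsto\sigma(0)$. The content here is that the loops in $\wt P$ lying over a fixed loop $\gamma\in LY$ form an $\Omega G$-torsor once we fix the value at $0$; equivalently that such lifting loops exist, which holds because $\gamma^*\wt P$ is a principal $G$-bundle over $S^1$ and hence trivial as $G$ is connected. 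Granting this, the map $[\sigma]\mapsto\sigma(0)$ is a $G$-equivariant bijection on fibres, hence an isomorphism, and pulling back by $\eta$ gives $\eta^*(\ev_0^*\wt P)=(\ev_0\circ\eta)^*\wt P=\wt P$.

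For $\cR\circ\cS$, start with a $\rho$-equivariant $LG$-bundle $\ol P\to Y$, so that $\cR(\cS(\ol P))=\eta^*\bigl(L(\ol P/\Omega G)\bigr)$. I would define $\Psi\colon\ol P\to\eta^*\bigl(L(\ol P/\Omega G)\bigr)$ by sending $p$ (with $\pi(p)=y$) to the loop $\phi\mapsto[R_\phi(p)]$, where $[\cdot]$ denotes the class modulo $\Omega G$. This lands in the pullback because $\pi([R_\phi(p)])=R_\phi(y)=\eta(y)(\phi)$, and it is smooth and covers $\id_Y$. The crux is $LG$-equivariance: using the defining relation $R_\phi(pk)=R_\phi(p)\rho_{-\phi}(k)$ of a $\rho$-equivariant bundle together with $\ev_0(\rho_{-\phi}(k))=k(\phi)$, one computes $[R_\phi(pk)]=[R_\phi(p)]\,k(\phi)$, which is exactly the pointwise $LG$-action $(\sigma\cdot k)(\phi)=\sigma(\phi)k(\phi)$ on the loop bundle. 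Compatibility with the circle actions is then a short check: $\Psi(R_\theta(p))$ is the loop $\phi\mapsto[R_{\phi+\theta}(p)]$, which is precisely $R_\theta$ applied to $\Psi(p)$ under the rotation action $R_\theta(\sigma)(\phi)=\sigma(\phi+\theta)$ transported through the equivariance $\eta(R_\theta y)=R_\theta\eta(y)$. Hence $\Psi$ is a morphism in $\Bun_{LG}^\rho/\Bun_{S^1}$ covering $\id_Y$, and therefore an isomorphism.

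Finally I would check naturality of both families in the obvious way: a morphism of $G$-bundles (resp.\ $\rho$-equivariant $LG$-bundles) is carried by $\cR$ and $\cS$ to maps that manifestly commute with $\sigma\mapsto\sigma(0)$ and with $p\mapsto(\phi\mapsto[R_\phi(p)])$, so the relevant squares commute. I expect the main obstacle to be the first direction, specifically establishing $L\wt P/\Omega G\cong\ev_0^*\wt P$: everything rests on knowing that a loop in $Y$ always lifts to a loop in $\wt P$, for which the connectedness of $G$ is essential. The remaining difficulty is purely bookkeeping --- getting the sign in $\ev_0(\rho_{-\phi}(k))=k(\phi)$ right, since this is where the mismatch between the left action $\rho$ and the right circle action enters and where the verification of $LG$-equivariance in the second direction would otherwise fail.
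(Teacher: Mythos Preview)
Your proposal is correct and follows essentially the same approach as the paper. Your map $\Psi(p)=(\phi\mapsto[R_\phi(p)])$ is exactly the paper's $\widehat\eta(p)(\theta)=R_\theta(p)\Omega G$, and your $[\sigma]\mapsto\sigma(0)$ is the paper's $\ev_0$; the paper simply states these are isomorphisms and leaves the equivariance and naturality checks to the reader, whereas you carry them out.

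The one organisational difference is in the $\cS\circ\cR$ direction: the paper applies $\ev_0$ directly to $\eta^*(L\wt P)/\Omega G\to\wt P$, while you first establish $L\wt P/\Omega G\cong\ev_0^*\wt P$ over all of $LY$ and then pull back by $\eta$. Your route is a mild generalisation and requires the same ingredient, namely that every loop in $Y$ lifts to a loop in $\wt P$; you are right that this needs $G$ connected, which the paper invokes only implicitly via the standing hypothesis stated at the end of the introduction.
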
 
\begin{proof} 
Start with a $\rho$-equivariant $LG$-bundle $\overline P \to Y$ over a circle bundle $Y \to M$. Applying $\cS$
yields $\overline  P / \Omega G \to Y$. Applying $\cR$ we first construct $L(\overline  P / \Omega G ) \to LY$
and then pull back with $\eta \colon Y \to LY$. We want to construct an isomorphism 
$\overline  P \to \eta^*(L(\overline  P / \Omega G ))$ of $\rho$-equivariant $LG$-bundles over $Y$. 
We do this by defining  a $\rho$-equivariant map $\widehat \eta \colon \overline  P \to L(\overline  P / \Omega G ) $
covering $\eta$  by $\widehat \eta ( p)(\theta) =  R_\theta(p)\Omega G$.  We leave the reader to check that this defines an isomorphism of bundles which 
is moreover a natural transformation from $\cR \circ \cS$ to the identity functor on $\Bun_{LG}^\rho/\Bun_{S^1} $.

In the other direction we start with a $G$-bundle $\wt P \to Y$ over a circle bundle $Y$. First we form the 
$LG$-bundle $L\wt P \to LY$, pull-back with $\eta \colon Y \to LY$ and then  $(\cS \circ \cR)(P) = \eta^*(L\wt P) / \Omega G$. The evaluation map  $\ev_0 \colon \eta^*(L\wt P) \to \wt P$  is an isomorphism of $LG / \Omega G = G$-bundles giving the required result. Again 
we leave the requirements of a natural transformation for the reader to check.  
\end{proof}

We define the {\em caloron transform} to be the composition
of functors
$$
\cC  = \cS \circ \cK \colon  \Bun_{LG \rtimes_\rho S^1} \to \Bun_G/\Bun_{S^1} 
$$
and deduce that 

\begin{proposition}[Caloron correspondence]
The caloron transform is an equivalence of categories.
\end{proposition}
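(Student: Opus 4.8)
The plan is to deduce this formally from the two equivalences already established, using the elementary fact that a composite of equivalences of categories is again an equivalence. Since $\cC = \cS \circ \cK$, it suffices to observe that each of $\cK$ and $\cS$ is one half of a pseudo-inverse pair and then to assemble these into a pseudo-inverse for the composite.

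First I would collect the ingredients. Applying the semi-direct correspondence with $K = LG$ furnishes functors $\cF$ and $\cK$ together with natural isomorphisms $\cK \circ \cF \simeq \id$ and $\cF \circ \cK \simeq \id$, so that $\cK$ is an equivalence with pseudo-inverse $\cF$. Proposition \ref{prop:pseudo-inverses} provides natural isomorphisms $\cR \circ \cS \simeq \id$ and $\cS \circ \cR \simeq \id$, so that $\cS$ is an equivalence with pseudo-inverse $\cR$.

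Next I would propose the explicit pseudo-inverse $\cF \circ \cR \colon \Bun_G/\Bun_{S^1} \to \Bun_{LG \rtimes_\rho S^1}$ and verify the two required natural isomorphisms by whiskering the four isomorphisms above:
$$
\cC \circ (\cF \circ \cR) = \cS \circ (\cK \circ \cF) \circ \cR \simeq \cS \circ \cR \simeq \id,
$$
$$
(\cF \circ \cR) \circ \cC = \cF \circ (\cR \circ \cS) \circ \cK \simeq \cF \circ \cK \simeq \id.
$$
Here each $\simeq$ is obtained by applying (whiskering) a functor to one of the given natural isomorphisms and then composing the results, so the transformations produced are again natural isomorphisms.

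The only point worth stating carefully — and the sole content remaining — is the behaviour of horizontal composition of natural isomorphisms: that applying a functor to a natural isomorphism yields a natural isomorphism, and that such whiskered isomorphisms compose. This is purely formal, so I do not expect any genuine obstacle. The substantive geometric work, namely the explicit constructions of $\cF$, $\cK$, $\cR$ and $\cS$ and of the natural transformations relating them, has already been carried out in the semi-direct correspondence and in Proposition \ref{prop:pseudo-inverses}; the present statement is just their composition.
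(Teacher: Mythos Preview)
Your proposal is correct and is exactly the argument the paper has in mind: the paper simply states that $\cC = \cS \circ \cK$ and ``deduces'' the proposition without further comment, relying on the fact that a composite of equivalences is an equivalence. You have merely made explicit the pseudo-inverse $\cF \circ \cR$ and the whiskering argument that the paper leaves implicit.
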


\begin{remark}
The caloron correspondence in this form first appeared in \cite{Bergman:2005} and later in \cite{Bouwknegt:2009}. We have introduced the equivariant bundles so we can proceed in a similar manner to   \cite{MurVoz} and  essentially reduce the inverse caloron transform to a pull-back which makes it simpler 
to define on connections.  
\end{remark}

It will be useful to write down a formula for the caloron transform of an $\LGS$-bundle $P \to Y$ directly. Note that this is given by the constructions outlined above as $\cC(P) = P \times_{LG} S^1$ where the $LG$ action on $P$ is factored through the map $LG \to \LGS$ (Proposition \ref{prop:equiv-semi} and the discussion preceding Proposition \ref{prop:pseudo-inverses}). The following Proposition gives an alternative description:

\begin{proposition}[\cite{Bouwknegt:2009, Vozzo:PhD}]\label{P:caloron direct}
The caloron transform of the $\LGS$-bundle $P \to Y$ is isomorphic to the $G$-bundle 
$$
\widetilde{P} = (P \times G \times S^1) / (\LGS),
$$
where the action of $\LGS$ on $P \times G \times S^1$ is given by 
$$
(p, g, \theta)(\gamma, \phi) = (p(\gamma, \theta), \gamma(\theta)^{-1}g, \theta - \phi)
$$
and the action of $h \in G$  on $[p,g,\theta] \in \widetilde P$ is given by  $[p,g,\theta]h = [p,gh,\theta]$.
\end{proposition}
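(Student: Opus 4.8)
The plan is to reduce the statement to an unwinding of the two functors whose composite defines $\cC$, and then to exhibit a single explicit isomorphism. First I would make the target of $\cC$ concrete. Applying $\cK$ to the $\LGS$-bundle $P$ produces, by the converse direction of Proposition \ref{prop:equiv-semi}, the $\rho$-equivariant $LG$-bundle $\ol P\to Y$ whose underlying space is $P$ itself, with $LG$ acting by $p\gamma=p(\gamma,0)$ and the circle acting by $R_\theta p=p(1,\theta)$, and with $Y=P(S^1)$. Applying $\cS$ then forms the associated bundle along $\ev_0\colon LG\to G$, so that
$$
\cC(P)=\ol P\times_{LG}G=(P\times G)/LG,\qquad [p\gamma,g]=[p,\gamma(0)g],
$$
a $G$-bundle over $Y=P/LG$ with residual action $[p,g]h=[p,gh]$. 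This is the object I must identify with $\wt P$.

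Next I would produce the isomorphism by realising $\cC(P)$ as the ``$\theta=0$ slice'' of $\wt P$. Concretely I would define $\Phi\colon\cC(P)\to\wt P$ by $\Phi([p,g])=[p,g,0]$ and check the four standard requirements. Well-definedness is the computation that the defining relation of $\cC(P)$ maps into that of $\wt P$: acting on $(p,\gamma(0)g,0)$ by $(\gamma,0)$ returns $(p\gamma,g,0)$, so $[p\gamma,g,0]=[p,\gamma(0)g,0]$, exactly as required. Surjectivity follows because acting on any $(p,g,\theta)$ by $(1,\theta)$ lands it at third coordinate $0$, giving $[p,g,\theta]=[R_\theta p,g,0]=\Phi([R_\theta p,g])$; injectivity follows because an $\LGS$-equivalence between two elements with third coordinate $0$ forces the circle parameter $\phi$ to vanish, and hence reduces to an $LG$-equivalence. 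Equivariance is immediate from $[p,g,\theta]h=[p,gh,\theta]$, and one checks that $\Phi$ covers the canonical $S^1$-bundle isomorphism $P/LG\cong P\times_{\LGS}S^1$ on the base.

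Before this can be run, one must know that $\wt P$ is itself a principal $G$-bundle over $Y$, and this is where I expect the real work to lie. The two points to nail down are that the displayed formula is a genuine free right action of $\LGS$, and that the residual right $G$-action has the fibres of $\wt P\to Y$ as its orbits. Freeness is easy (the third coordinate forces $\phi=0$, and then freeness of the $\LGS$-action on $P$ forces $\gamma=1$), but the associativity check is the delicate one: it must be organised around the semidirect-product law $(\gamma_1,\phi_1)(\gamma_2,\phi_2)=(\gamma_1\rho_{\phi_1}(\gamma_2),\phi_1+\phi_2)$ together with the convention $\rho_\phi(\gamma)(\psi)=\gamma(\psi-\phi)$, the crucial identity being $[\gamma_1\rho_{\phi_1}(\gamma_2)](\theta)=\gamma_1(\theta)\,\gamma_2(\theta-\phi_1)$ for the middle $G$-factor. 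The main obstacle throughout is bookkeeping: the construction mixes the right bundle action on $P$, the left $\ev_0$-action used to form the associated bundle, and the evaluation-at-$\theta$ that twists the $G$-factor, so the sign conventions of Example \ref{ex:loop_bundles} must be tracked with care at every stage. Once the action is confirmed, local triviality of $\wt P$ descends from that of $P$, and $\Phi$ is the desired isomorphism of $G$-bundles over $Y$.
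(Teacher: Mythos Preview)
Your argument is correct and is the paper's proof run in the opposite direction.  The paper defines $\widehat\psi\colon P\times G\times S^1\to\ol P$ by $(p,g,\theta)\mapsto p(g,\theta)$ and checks that this descends to a $G$-bundle isomorphism $\psi\colon\wt P\to\ol P/\Omega G$; your map $\Phi\colon\ol P\times_{LG}G\to\wt P$, $[p,g]\mapsto[p,g,0]$, is precisely the inverse of $\psi$.  The paper's direction trades your separate surjectivity/injectivity checks for a single computation showing that two representatives of a class in $\wt P$ have images differing by a based loop, but the content is identical.

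One practical remark on the part you flagged as delicate: the paper does not verify that $\wt P$ is a principal $G$-bundle at all, deferring this to the cited references, so your associativity discussion is an addition rather than a comparison point.  If you do carry it out, note that the displayed action contains a typo --- the first slot should read $p(\gamma,\phi)$ (this is what the paper's own proof uses when it writes the alternative representative as $(p(\gamma,\phi),\gamma(\theta)^{-1}g,\theta-\phi)$).  With $p(\gamma,\theta)$ as printed, the first coordinate fails associativity, whereas the second-coordinate identity $[\gamma_1\rho_{\phi_1}(\gamma_2)](\theta)=\gamma_1(\theta)\gamma_2(\theta-\phi_1)$ you isolated is exactly right.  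None of your specific equivalences are affected, since in each of them either $\phi=\theta$, $\phi=0=\theta$, or $\gamma=1$.
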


\begin{proof}
In \cite{Bouwknegt:2009} and \cite{Vozzo:PhD} the caloron correspondence is developed using the bundle $\widetilde{P}$ above. Here we show that the map $\widehat \psi \colon P\times G \times S^1 \to \ol P $ given by $(p, g ,\theta) \mapsto R_\theta (p g) = p(g, \theta)$ descends to an isomorphism of $G$-bundles $\psi \colon \wt P \xrightarrow{\sim} \ol P / \Omega G.$ To see this, note that if we chose a different representative of the equivalence class of $(p,g,\theta)$, say $(p(\gamma, \phi), \gamma(\theta)^{-1}g, \theta -\phi)$, then 
\begin{align*}
\widehat \psi (p(\gamma, \phi), \gamma(\theta)^{-1}g, \theta -\phi) 
	&= p (\gamma, \phi) (\gamma(\theta)^{-1}g, \theta -\phi)\\
	&= p (\gamma \gamma(\theta)^{-1} g, \theta)\\
	&= p (g, \theta)(g^{-1} ,-\theta)(\gamma \gamma(\theta)^{-1} g, \theta)\\
	&= p (g, \theta) (g^{-1} \rho_{-\theta}(\gamma)\gamma(\theta)^{-1} g, 0)
\end{align*}
and $g^{-1} \rho_{-\theta}(\gamma)\gamma(\theta)^{-1} g$ is a based loop in $G$. Clearly $\psi$  commutes with the $G$ action and so is an isomorphism of $G$-bundles.

\end{proof}

We can summarise the results of this section with the following diagram:

$$
\xymatrix@C=10ex@R=3ex{\wt P \ar[dd]^G & && \ol P\ar[dd]^{LG} &&  \\
		&\ar[r]^{\cR} &&& \ar[r]^{\cF} && P\ar[dd]^{\LGS}\\
		Y\ar[dd]^{S^1} &&& Y\ar[dd]^{S^1} && \\
		&& \ar[l]_{\cS} &&& \ar[l]_{\cK}&  M\\
		M &&& M & }
$$

\subsection{Higgs fields and $LG \rtimes_\rho S^1$-bundles}

We want to first discuss how the semi-direct correspondence in Proposition \ref{prop:equiv-semi} can be extended to bundles with connections.  We find along the way that we need to introduce Higgs fields. 

\begin{definition}
Let $\rho \colon S^1 \to \Aut(K)$ be a homomorphism and $\overline P \to Y$ a $\rho$-equivariant bundle.  A connection 
$\bar A \in \Omega^1(\overline P, \fk)$ is called \emph{$\rho$-equivariant} if $R_{\theta}^*(\bar A) = \rho_{-\theta}(\bar A)$ where
we abuse notation and denote by $\rho_\theta \in \Aut(\fk)$ the automorphism of Lie algebras induced by $\rho_{\theta} \in  \Aut(K)$. 
\end{definition}

\begin{example}
\label{ex:loop_connection}
We have seen in Example \ref{ex:loop_bundles} that if $\widetilde P \to Y$ is a $G$-bundle then $L\widetilde P \to LY$ is a 
$\rho$-equivariant $LG$-bundle. If $\tilde A$ is a connection on $\wt P$ we can define a connection $L\tilde A$ on $L\wt P \to LY$ as follows. Let  $\gamma \in L\wt P$ and note that  a tangent vector to $L\wt P$ at $\gamma$ is a section 
of the tangent bundle to $\wt P$ pulled-back by $\gamma$. Let $\xi \in T_\gamma L\wt P  = \Gamma([0, 1], \gamma^*(T\wt P))$
we define $L\tilde{A}_\gamma(\xi) \in L\fg$ by $L\tilde{A}_\gamma(\xi) (\theta) = \tilde{A}_{\gamma(\theta)}(\xi(\theta))$. It is straightforward
to check that this is a $\rho$-equivariant connection. 
\end{example}

It is not obvious that $\rho$-equivariant connections exist but we will prove this below in the case of interest. 

We need some additional notation. Let $\delta(p) \in T_pP$ be the tangent vector to the circle action and let $\iota_p \colon \fk \to T_pP$ be the derivative of the map $k \mapsto R_k(p)$.   If $\gamma \colon S^1 \to K$   denote  by $\partial \gamma(\theta) \in T_{\gamma(\theta)} K$ the 
image of $\partial/\partial\theta$ under the tangent to $\gamma$.  Denote by $\gamma^{-1} \partial\gamma(\theta)$ the 
left-translate of this to $\fk$ so that $\gamma^{-1} \partial \gamma$ is a loop in $\fk$.  In particular $\theta \mapsto \rho_\theta(k)$ is defines a map $\rho_k \colon S^1 \to K$ and thus
$\rho_k^{-1} \partial \rho^{\vp}_k(0) \in \fk$.

\begin{example}
If $\gamma \in K = LG$ then  $ \rho_\gamma^{-1} \partial \rho^{\vp}_\gamma(0) = \gamma^{-1} \partial \gamma$.  
\end{example}

We also note here the data which corresponds to a connection on a $K \rtimes_\rho S^1$-bundle $P.$   It is straightforward using the definition of connection and the semi-direct product action to show that

\begin{proposition}\label{P:(A,a)}
A connection on a $K \rtimes_\rho S^1$-bundle $P \to M$ is equivalent to a pair $(A, a),$ where $A$ is $\fk$ valued and $a$ is $\RR$ valued, satisfying
\begin{align*}
a(\delta) &= 1 \\
A_p(\iota_p (\xi )) &= \xi \\
\end{align*}
and
\begin{align*}
R_k^*(A) &= \ad(k^{-1})A - a \rho_k^{-1} \partial \rho^{\vp}_k(0)\\
R_k^*(a) &=  a \\
R_{\theta}^*(A) &= \rho_{-\theta}(A) \\
R_{\theta}^*(a) &= a \\
\end{align*}

\end{proposition}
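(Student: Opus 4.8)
The plan is to apply the two defining axioms of a principal connection to the structure group $H = K\rtimes_\rho S^1$ and read off what they say about the two components of the connection form. Recall that a connection on a principal $H$-bundle is a $1$-form $\omega$ valued in $\mathrm{Lie}(H)$ which (i) sends the fundamental vector field of each $X\in\mathrm{Lie}(H)$ back to $X$, and (ii) is equivariant, $R_h^*\omega = \ad(h^{-1})\omega$, where $\ad$ is the adjoint representation of $H$. Here $\mathrm{Lie}(H) = \fk\oplus\RR$ as a vector space, the $\RR$-summand being the Lie algebra of the circle factor, so I write $\omega = (A,a)$ with $A$ the $\fk$-component and $a$ the $\RR$-component. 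Establishing the claimed equivalence then amounts to expressing axioms (i) and (ii) in terms of $A$ and $a$.

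First I would pin down the fundamental vector fields. The one-parameter subgroups $t\mapsto(\exp_K(t\xi),0)$ and $t\mapsto(1,t)$ have tangents $(\xi,0)$ and $(0,1)$ at the identity, and by the definitions of $\iota_p$ and $\delta$ their fundamental vector fields are exactly $\iota_p(\xi)$ and $\delta(p)$. Axiom (i) applied to these generators therefore decomposes into the vertical normalisations $A_p(\iota_p(\xi)) = \xi$ and $a(\delta) = 1$ (together with the complementary vanishing $A(\delta)=0$ and $a(\iota_p(\xi))=0$), which are the first two displayed conditions.

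Next I would compute the adjoint action of $H$ on $\fk\oplus\RR$. Since every element factors as $(k,\theta) = (k,0)(1,\theta)$, it suffices to treat $h = (k,0)$ and $h = (1,\theta)$ separately, and equivariance on these two families is equivalent to equivariance on all of $H$ because $\ad$ is a homomorphism and pullback commutes with the constant linear maps $\ad(h^{-1})$. For $h = (1,\theta)$ a direct conjugation calculation in the semidirect product gives $\ad((1,-\theta))(\xi,s) = (\rho_{-\theta}(\xi),s)$, so axiom (ii) yields $R_\theta^*A = \rho_{-\theta}(A)$ and $R_\theta^*a = a$. For $h = (k,0)$ the $\RR$-summand is fixed, giving $R_k^*a = a$, while the $\fk$-summand produces $\ad(k^{-1})$ on the $\fk$-part plus a cross term coming from $\ad((k^{-1},0))(0,1)$; matching components then gives the stated formula for $R_k^*A$.

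The main obstacle is precisely this cross term: computing $\ad((k^{-1},0))(0,1) = \tfrac{d}{dt}\big|_0(k^{-1},0)(1,t)(k,0)$. Carrying out the semidirect-product multiplication collapses this to $\tfrac{d}{dt}\big|_0\big(k^{-1}\rho_t(k),\,t\big)$, whose $\fk$-component is the left-translate to $\fk$ of $\tfrac{d}{d\theta}\big|_0\rho_\theta(k)$, namely $\rho_k^{-1}\partial\rho_k(0)$. This is the only place where the $\rho$-twist in the group law genuinely enters — it is what distinguishes the computation from that for the ordinary product $K\times S^1$ — and keeping careful track of the sign conventions for the circle action and for $\rho_k^{-1}\partial\rho_k(0)$ is the delicate point. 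Finally, for the converse I would assemble $\omega = (A,a)$ from any pair satisfying the listed conditions and observe that the computations above run in reverse: the vertical conditions give axiom (i) and the four equivariance conditions give axiom (ii) on the generators $(k,0)$ and $(1,\theta)$, hence on all of $H$, so $\omega$ is a genuine connection.
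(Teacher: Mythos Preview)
The paper does not actually give a proof of this proposition; it merely asserts that ``it is straightforward using the definition of connection and the semi-direct product action'' and states the result. Your proposal carries out precisely this verification: splitting $\omega$ according to $\mathrm{Lie}(K\rtimes_\rho S^1)=\fk\oplus\RR$, reading off the vertical normalisations from the fundamental vector fields of $(\xi,0)$ and $(0,1)$, factoring $(k,\theta)=(k,0)(1,\theta)$ and computing $\mathrm{Ad}$ on each factor. Your identification of the cross term $\mathrm{Ad}((k^{-1},0))(0,1)=\tfrac{d}{dt}\big|_{0}(k^{-1}\rho_t(k),t)$ as the source of the $\rho_k^{-1}\partial\rho_k(0)$ contribution is exactly the point of the computation and is correct.

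One remark: you rightly note the complementary vanishing conditions $A(\delta)=0$ and $a(\iota_p(\xi))=0$, which are forced by axiom (i) but are not listed in the paper's statement. These are genuinely needed for the ``equivalence'' to go both ways (without $A(\delta)=0$, for instance, the pair $(A,a)$ does not reassemble to a form reproducing the fundamental vector field of $(0,1)$), so your treatment is in fact slightly more careful than the statement you are proving.
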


We also need the notion of a Higgs field.

\begin{definition}
\label{def:Higgsfield}
Let $P \to M$ be a $K \rtimes_\rho S^1$-bundle.  A {\em Higgs field} for $P$ is a function $\Phi \colon P \to \fk$
satisfying:
$$
\Phi(p(k, \theta)) = \rho_{-\theta}(\ad(k^{-1})\Phi(p) + \rho_k^{-1} \partial \rho^{\vp}_k(0) )
$$
for all $k \in K, \theta \in S^1$ and $p \in P$.
\end{definition}

\begin{proposition}
Higgs fields exist.
\end{proposition}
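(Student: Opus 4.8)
The plan is to exploit the fact that the defining condition in Definition \ref{def:Higgsfield} is \emph{affine}: the inhomogeneous term $\rho_{-\theta}\bigl(\rho_k^{-1}\partial\rho^{\vp}_k(0)\bigr)$ is independent of $\Phi$, so any combination $\sum_\alpha f_\alpha\Phi_\alpha$ of Higgs fields whose coefficients satisfy $\sum_\alpha f_\alpha = 1$ is again a Higgs field. This reduces the problem to constructing Higgs fields locally and splicing them together with a partition of unity on $M$, exactly as one proves the existence of connections. Throughout I write $\mu(k) = \rho_k^{-1}\partial\rho^{\vp}_k(0)\in\fk$; note that the transformation law mirrors that of the $\fk$-valued part $A$ of a connection in Proposition \ref{P:(A,a)}.

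First I would establish local existence. Over an open set $U\subseteq M$ on which $P$ is trivial we may identify $P|_U$ with $U\times(K\rtimes_\rho S^1)$, and I would simply declare
$$
\Phi\bigl(x,(k,\theta)\bigr) = \rho_{-\theta}\bigl(\mu(k)\bigr),
$$
that is, assign the value $0$ along the identity section and propagate it by the required transformation law. Checking that this satisfies Definition \ref{def:Higgsfield} on $P|_U$ reduces, after expanding $(k,\theta)(k',\theta') = (k\rho_\theta(k'),\theta+\theta')$ and applying $\rho_{\theta'}$ to both sides, to the single identity
$$
\rho_{-\theta}\,\mu\bigl(k\rho_\theta(k')\bigr) = \ad(k'^{-1})\rho_{-\theta}\mu(k) + \mu(k').
$$

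The heart of the matter is the behaviour of $\mu$ under the group and circle actions, and this is the step I expect to be the main obstacle. Differentiating the homomorphism property $\rho_\theta(k_1k_2) = \rho_\theta(k_1)\rho_\theta(k_2)$ at $\theta=0$ and left-translating to $\fk$ shows that $\mu$ is a crossed homomorphism,
$$
\mu(k_1k_2) = \ad(k_2^{-1})\mu(k_1) + \mu(k_2),
$$
the cross term arising because left and right translations commute; and differentiating $\rho_\theta(\rho_\phi(k)) = \rho_{\theta+\phi}(k)$ gives the equivariance $\mu(\rho_\phi(k)) = \rho_\phi(\mu(k))$. Substituting $k_1=k$, $k_2=\rho_\theta(k')$ into the cocycle identity, using this equivariance, and commuting $\rho_{-\theta}$ past $\ad(\rho_\theta(k'^{-1}))$ (legitimate since each $\rho_\theta$ is a Lie algebra automorphism) yields precisely the displayed local identity. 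Some care is needed in the loop group case $K=LG$ to see that $k\mapsto\mu(k)$ is smooth in the appropriate Fréchet sense, but this is routine.

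Finally I would assemble the global object. Choosing a trivialising cover $\{U_\alpha\}$ of $M$ with local Higgs fields $\Phi_\alpha$ as above, a subordinate partition of unity $\{f_\alpha\}$, and setting $\Phi = \sum_\alpha (f_\alpha\circ\pi)\,\Phi_\alpha$ with each summand extended by zero, one checks the result directly: since the $f_\alpha\circ\pi$ are invariant under the $K\rtimes_\rho S^1$-action and $\sum_\alpha f_\alpha = 1$, the terms $\ad(k^{-1})\Phi_\alpha(p)$ combine linearly while the common inhomogeneous term $\rho_{-\theta}\mu(k)$ survives with total coefficient $1$, so $\Phi$ satisfies Definition \ref{def:Higgsfield} on all of $P$.
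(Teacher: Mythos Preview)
Your proposal is correct and follows essentially the same strategy as the paper: observe that the Higgs field condition is affine so that partition-of-unity combinations work, then write down an explicit Higgs field on the trivial bundle. Your treatment is in fact more thorough, since you supply the cocycle and equivariance identities for $\mu$ needed to verify the local formula, and your expression $\Phi(x,(k,\theta))=\rho_{-\theta}(\mu(k))$ includes the $\rho_{-\theta}$ factor that the paper's displayed formula omits (the paper leaves the check as an exercise).
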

\begin{proof}
First notice that a convex combination of Higgs fields is a Higgs field. We can then combine locally 
defined Higgs fields with a partition of unity. So assume now that $P = M \times K \rtimes_\rho S^1$ 
and then define
$$
\Phi((k, \theta)) = \rho_k^{-1} \partial \rho^{\vp}_k(0)
$$
We leave it as an exercise to show that this is a Higgs field.
\end{proof}

\begin{proposition} 
\label{prop:equiv-semi-conn}
Let $Y \to M$ be an $S^1$-bundle with connection $\alpha$ and $\rho \colon S^1 \to \Aut(K)$ a homomorphism. Then there is a bijective correspondence between
\begin{enumerate}
\item Principal $K$-bundles $\ol P \to Y$ which are $\rho$-equivariant for the circle 
 action on $Y$ equipped with a $\rho$-equivariant connection $\bar A$ on $\ol P \to Y$ and
\item Principal $K \rtimes_\rho S^1$-bundles $P \to M$ with a circle bundle isomorphism from $P(S^1)$  to $Y$
and a pair $(( A, a), \Phi)$ consisting of a connection $(A, a) $ which projects to 
$\alpha$ on $Y$ under the isomorphism and a  Higgs field $\Phi$  on $P \to M$.
\end{enumerate}
\end{proposition}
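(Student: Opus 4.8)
The plan is to use the identification of total spaces already furnished by Proposition~\ref{prop:equiv-semi} and simply transport the connection data across it. Under that correspondence the bundles $\ol P \to Y$ and $P \to M$ share a common total space $E$, on which the $K$-action is $pk = p(k,0)$, the circle action is $R_\theta(p) = p(1,\theta)$, and the projection $\pi\colon \ol P \to Y$ is identified with $E \to P(S^1) \cong Y$. I would then define the dictionary between a $\rho$-equivariant connection $\bar A$ on one side and a pair $((A,a),\Phi)$ on the other as follows: set $a := \pi^*\alpha$, the pullback of the fixed connection on $Y$; set $\Phi := \bar A(\delta)$, the contraction of $\bar A$ with the vector field $\delta$ generating the circle action (up to an overall sign fixed by the conventions of Definition~\ref{def:Higgsfield}); and set $A := \bar A - a\Phi$. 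In the reverse direction, given $((A,a),\Phi)$ one sets $\bar A := A + a\Phi$. That the two assignments are mutually inverse is immediate from $a(\delta)=1$ and $A(\delta)=0$ (the $\fk$-component of the connection form annihilates the circle-fundamental field): indeed $(A+a\Phi)(\delta)=\Phi$ recovers the Higgs field and subtracting $a\Phi$ recovers $A$, while $(\bar A - a\Phi)+a\Phi = \bar A$.

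The real work is to check that $\rho$-equivariance of $\bar A$ is equivalent to the list of identities in Proposition~\ref{P:(A,a)} together with the Higgs-field law of Definition~\ref{def:Higgsfield}. The single computational input that drives everything is the behaviour of $\delta$ under the $K$-action: differentiating the bundle's equivariance relation $R_\theta(pk) = R_\theta(p)\rho_{-\theta}(k)$ at $\theta=0$ gives
$$
\delta_{pk} = (R_k)_*\delta_p - \iota_{pk}\big(\rho_k^{-1}\partial\rho^{\vp}_k(0)\big).
$$
Feeding this into $\bar A$ and using the ordinary equivariance $R_k^*\bar A = \ad(k^{-1})\bar A$ and the fundamental-vector-field normalisation $\bar A(\iota_p(\xi))=\xi$ produces exactly the correction term $\rho_k^{-1}\partial\rho^{\vp}_k(0)$ that appears both in the transformation law for $A$ and in the Higgs-field law. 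The normalisations $A_p(\iota_p(\xi))=\xi$ and $a_p(\iota_p(\xi))=0$ follow from $\bar A(\iota_p(\xi))=\xi$ and from $a=\pi^*\alpha$ (since $\iota_p(\xi)$ is $\pi$-vertical), while $a(\delta)=1$ reflects that $\delta$ projects to the generating field of $Y$. The circle-equivariance is the easier half: because $\delta$ is invariant under its own flow one obtains $\Phi\circ R_\theta = \rho_{-\theta}\Phi$ and $R_\theta^* A = \rho_{-\theta}(A)$ directly from $R_\theta^*\bar A = \rho_{-\theta}(\bar A)$ and $R_\theta^* a = a$, the latter because $\alpha$ is invariant under the abelian structure circle.

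The main obstacle is essentially bookkeeping: keeping the various occurrences of $\rho$ and their signs straight, so that the correction terms and the $\rho_{-\theta}$-conjugations land in precisely the positions demanded by Proposition~\ref{P:(A,a)} and Definition~\ref{def:Higgsfield}; once the displayed formula for $\delta_{pk}$ is in hand the remaining manipulations are linear and routine. Finally, this correspondence settles the existence question flagged earlier in the section: ordinary connections $(A,a)$ on $P \to M$ always exist, and Higgs fields exist by the proposition above, so the reverse assignment $\bar A = A + a\Phi$ produces a $\rho$-equivariant connection on any $\rho$-equivariant bundle $\ol P \to Y$.
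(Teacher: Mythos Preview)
Your proposal is correct and follows essentially the same approach as the paper: identify the total spaces via Proposition~\ref{prop:equiv-semi}, then pass between $\bar A$ and $((A,a),\Phi)$ by the formulas $\bar A = A + a\Phi$ and, conversely, $a=\pi^*\alpha$, $\Phi = \bar A(\delta)$, $A = \bar A - a\Phi$. The paper's proof is terse and simply declares the verification ``easy''; your write-up supplies the missing computational input, namely the formula $\delta_{pk} = (R_k)_*\delta_p - \iota_{pk}(\rho_k^{-1}\partial\rho^{\vp}_k(0))$, which is exactly what is needed to see the Higgs-field law and the anomalous $K$-equivariance of $A$ emerge.
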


\begin{proof}

Note that as a manifold $\ol P = P$. It is easy to show that  
$$
\bar A =  A + a \Phi
$$
defines a $\rho$-equivariant connection on $\ol P \to Y$. 

Conversely given the $\rho$-equivariant connection on $\ol P$ we can define a Higgs field $\Phi(p) = A_p(\delta_p)$
for $p \in P = \ol P$. Then the connection $(A, a)$ is defined by
$$
(A, a) = (\bar A + \pi^*(\alpha) \Phi, \pi^*(\alpha))
$$
where $\pi \colon \ol P \to Y$. 

\end{proof}

These constructions allow us to extend the functors defined in the previous section to act on bundles with  connections and Higgs fields.  We define a new category $\cBun_K^\rho/\cBun_{S^1} $ to consist
of objects $(\ol P, Y, M)$  in   $\Bun_K^\rho/\Bun_{S^1} $ with the addition of a connection $\a$ on $Y \to M$ and 
a $\rho$-equivariant connection $\bar A$ on  $\ol P \to Y$. Morphisms are just the morphisms in $\Bun_K^\rho/\Bun_{S^1} $
which preserve the connections. Similarly an object in $\cBun_{K \rtimes_\rho S^1}$ is an object $P \to M$ in $\Bun_{K \rtimes_\rho S^1}$ with a connection and Higgs field $((A,a), \Phi)$ again morpisms preserve connections and Higgs fields.  Then we have functors 
$$
\cF \colon \cBun_K^\rho/\cBun_{S^1} \to \cBun_{K \rtimes_\rho S^1} 
$$
and 
$$
\cK \colon \cBun_{K \rtimes_\rho S^1}  \to  \cBun_K^\rho/\cBun_{S^1} 
$$
which  are pseudo-inverses. 

To define the caloron transform for bundles with connections (and Higgs fields) we have to extend the second part of the correspondence
$$
\cR \colon \Bun_G/\Bun_{S^1} \to \Bun_{LG}^\rho/\Bun_{S^1}
$$
and
$$
\cS \colon  \Bun_{LG}^\rho/\Bun_{S^1}  \to \Bun_G/\Bun_{S^1}
$$
to act appropriately. First we need to define the corresponding categories. We define $\cBun_G/\cBun_{S^1} $ 
to consist of $G$-bundles  $\wt P \to Y$ with connection $\tilde A$ and a connection $\a$ 
on $Y \to M$.  Morphisms are those morphisms preserving the connections.  The definition of $\cBun_{LG}^\rho/\cBun_{S^1}$ follows from the definitions already given. 

To extend the correspondence note first that if we start with a $\rho$-equivariant  $LG$-bundle $\ol P  \to Y$ with a $\rho$-equivariant connection $ \bar A$ then there is an induced connection on the $G$-bundle $\ol P \times_{LG} G = \ol P / \Omega G$. 
In the other direction if $\wt P \to Y$ is a $G$-bundle and  $\tilde A$ is a connection on $\wt P \to Y$ and  $\a$ a conection on $Y \to M$ then there is a loop connection $L\tilde A$ on $L\wt P \to LY$ which is an $LG$ connection.
This pulls back to give a connection $\eta^*(L\tilde A)$. It remains to check that this is $\rho$-equivariant but as $\eta$
is $\rho$-equivariant it suffices to check this for $L\tilde A$ which we noted in Example \ref{ex:loop_connection} is straightforward.

It is now easy to define the inverse caloron correspondence with connection as it is the result of applying
Proposition \ref{prop:equiv-semi-conn} and noting that equivariant connections and Higgs fields pull-back.  
We have

\begin{proposition}\label{P:LGxS^1 connection correspondences}
The caloron correspondence extends to an equivalence of categories  between $G$-bundles with connection over $S^1$-bundles with connection and $\LGS$-bundles with connection and Higgs field.
\end{proposition}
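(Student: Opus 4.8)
The plan is to assemble the enriched caloron transform as the composition $\cC = \cS \circ \cK$ exactly as in the unconnected case, but now between the categories carrying connection and Higgs-field data, and then to verify that the pseudo-inverse relations already established descend to these richer categories. Both halves of the composition have in effect already been equipped with connection data above: Proposition \ref{prop:equiv-semi-conn} supplies the object-level correspondence for $\cF$ and $\cK$, sending a connection--Higgs pair $((A,a),\Phi)$ to the $\rho$-equivariant connection $\bar A = A + a\Phi$ (with $\alpha$ read off from $a$) and back, while the discussion preceding the proposition explains how $\cR$ and $\cS$ act via the loop-connection construction $L\tilde A$ of Example \ref{ex:loop_connection}, its pullback $\eta^*(L\tilde A)$, and the descent of a $\rho$-equivariant connection through $\Omega G$.

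First I would check functoriality on each enriched category: that a morphism preserving connections and Higgs fields on the $\LGS$ side is carried by $\cK$ to a connection-preserving morphism of $\rho$-equivariant bundles, and conversely. Since the formula $\bar A = A + a\Phi$ and its inverse are natural in the bundle, this is a direct verification from the definitions of morphisms in $\cBun_K^\rho/\cBun_{S^1}$ and $\cBun_{K\rtimes_\rho S^1}$. The same applies to $\cR$ and $\cS$: the loop connection and the pullback by $\eta$ are manifestly functorial, and descent through $\Omega G$ respects bundle maps.

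The substance of the proof is to promote the natural isomorphisms $\cK\circ\cF\simeq\id$, $\cF\circ\cK\simeq\id$, $\cR\circ\cS\simeq\id$ and $\cS\circ\cR\simeq\id$ to the enriched setting by showing each underlying natural transformation preserves the connection data. For the semi-direct half this means checking that the identifications of Remark \ref{rem:reduced} intertwine $((A,a),\Phi)$ with $\bar A = A+a\Phi$; but since $\ol P = P$ as manifolds and the connection formulas are explicit, this is immediate. For the loop half I would return to the two isomorphisms in the proof of Proposition \ref{prop:pseudo-inverses}: the map $\widehat\eta\colon \ol P \to L(\ol P/\Omega G)$ and the evaluation $\ev_0\colon \eta^*(L\tilde P)\to\tilde P$. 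One must verify that $\widehat\eta$ pulls the loop connection of the descended bundle back to the original $\rho$-equivariant connection $\bar A$, and that $\ev_0$ carries the descended connection of $\eta^*(L\tilde A)$ to $\tilde A$.

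The main obstacle will be this last step, tracking a connection through the composite $\cS\circ\cR$. The difficulty is that $L\tilde A$ is defined pointwise along the loop by $L\tilde A_\gamma(\xi)(\theta)=\tilde A_{\gamma(\theta)}(\xi(\theta))$, so after pulling back by $\eta$ and passing to the quotient by $\Omega G$ one must see that evaluating the resulting $LG/\Omega G = G$ connection at $\theta = 0$ reproduces $\tilde A$. Concretely, for $y\in Y$ one has $\eta(y)(0)=y$, so $\ev_0$ should match the $\theta=0$ component of the loop connection with $\tilde A_y$; the care needed is in checking that the $\Omega G$-descent does not disturb this and that $\rho$-equivariance is respected throughout. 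Once these identifications of connections are in hand, the natural transformations of the unconnected equivalence upgrade automatically, and composing the two enriched pseudo-inverse pairs yields the claimed equivalence between $\cBun_{LG\rtimes_\rho S^1}$ and $\cBun_G/\cBun_{S^1}$.
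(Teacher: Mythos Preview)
Your proposal is correct and follows essentially the same route as the paper: reduce to checking that the natural isomorphisms of Proposition~\ref{prop:pseudo-inverses} intertwine the connections, with the semi-direct half already handled by Proposition~\ref{prop:equiv-semi-conn}. One small recalibration: in the paper the $\cS\circ\cR$ direction is nearly tautological (it is just evaluation at $\theta=0$), while the $\cR\circ\cS$ direction, showing $\widehat\eta^*LB=\bar A$, is where the $\rho$-equivariance of $\bar A$ does the real work.
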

\begin{proof}
We need to show the constructions in Proposition \ref{prop:pseudo-inverses} preserve the connections. Recall 
that if $Q \to X$ is a $LG$-bundle  then $Q/\Omega G = Q \times_{LG} G$ is a $G$-bundle. 
Moreover if $\ev_0 \colon LG \to G$ is the evaluation map whose kernel is $\Omega G$ and $p \colon Q \to Q/\Omega G$
is the projection then  given a connection one-form $A$ on $Q$ there is a unique connection one-form $B$ on $Q/\Omega G$ 
satisfying $p^*B = \ev_0(A)$ where $\ev_0 \colon L\fg \to \fg$ is the induced evaluation map on Lie algebras 
(see \cite{Kobayashi:1963} for a proof in a more general setting).

Recall from Proposition \ref{prop:pseudo-inverses} the constructions involved.
We start with a $\rho$-equivariant $LG$-bundle $\ol P \to Y$ over a circle bundle $Y \to M$. Then we  construct an isomorphism 
$\ol  P \to \eta^*(L(\ol  P / \Omega G ))$ of $\rho$-equivariant $LG$-bundles over $Y$
by defining  a $\rho$-equivariant map $\widehat \eta \colon \ol  P \to L(\ol  P / \Omega G ) $
covering $\eta$  by $\widehat \eta ( p)(\theta) =  R_\theta(p)\Omega G$.   Notice that we can lift 
$\widehat \eta$ to $\tilde \eta \colon \ol P \to L \ol P$ by letting $\tilde\eta ( p)(\theta) =  R_\theta(p)$.
Consider a $\rho$-equivariant connection $\bar A$ on $\ol P$.  Let $B$ be the induced connection on $\ol P / \Omega G$. Then if $p \colon \ol P \to \ol P / \Omega G$ we have $p^*B = \ev_0(\bar A)$  from the discussion above.
The connection on $L(\ol  P / \Omega G ) $ is $LB$ and we need to show that $\widehat\eta^*LB = \bar A$. 
We have 
$$
\widehat \eta^*LB = \tilde \eta^* p^* LB = \tilde\eta^*L ( \ev_0(\bar A)).
$$
Let $\xi $ be a tangent vector at $q \in \ol P$ and note that the $\rho$-equivariance of $\bar A$ implies that $R_\theta^* \bar A = \rho_{-\theta}(\bar A)$ 
so that 
$$
\bar A(R_\theta(p))(R_\theta(\xi))(\phi) = \bar A(p)(\xi)(\phi + \theta)
$$
so that 
$$ 
\bar A(R_\theta(p))(R_\theta(\xi))(0) = \bar A(p)(\xi)(\theta)
$$ 
and thus
\begin{align*}
\tilde\eta^*(L ( \ev_0(\bar A)))(p)(\xi)(\theta)  &= \bar A(R_\theta(p))(R_\theta(\xi))(0)\\
 &= \bar A(p)(\xi)(\theta)
\end{align*}
so that
$$
\widehat \eta^*LB = \tilde\eta^*L ( \ev_0(\bar A))  = \bar A.
$$
as required.

In the other direction we start with a $G$-bundle $\wt P \to Y$ over a circle bundle $Y$ and use the evaluation map  $\ev_0 \colon \eta^*(L\wt P)/\Omega G \to \wt P$  to define an isomorphism of $LG / \Omega G = G$-bundles.
If $\tilde A$ is a connection on $\wt P$ then $\eta^*L\tilde A$ is a connection 
on $\eta^*(L\wt P)$ and the connection $B$ on $\eta^*(L\wt P) / \Omega G$ satisfies $p^*B = \ev_0(\eta^*L\tilde A)$ where $p \colon \eta^*(L\wt P) \to \eta^*(L\wt P)/\Omega G$. Denoting $\widetilde \ev_0 \colon \eta^*(L\wt P) \to \wt P$  where $\widetilde\ev_0 = \ev_0 \circ p$ it suffices to show that 
$\widetilde\ev_0^*\tilde A = \ev_0(\eta^*L\tilde A)$.  If $\xi$ is a tangent vector to $\gamma \in L\wt P$ then $(\widetilde\ev_0^*\tilde A)(\gamma)(\xi) = 
\tilde A(\gamma(0))(\xi(0)) = \ev_0( \theta \mapsto \tilde A(\gamma(\theta))(\xi(\theta))) =  \ev_0(\eta^*L\tilde A)(\gamma)(\xi)$ as required.

\end{proof}

Recall that in Proposition \ref{P:caloron direct} we gave a formula for the caloron transform $\widetilde{P}$ of $P$. Namely, $\wt P = (P \times G \times S^1)/ (\LGS)$. We can also give a formula for the connection on this bundle induced by the functor $\cC$

\begin{proposition}\label{P:caloron connection}
Let $P$ be an $\LGS$-bundle with connection $(A,a)$ and Higgs field $\Phi$. Then the connection form on the bundle $\wt P$ (considered as a form on $P \times G \times S^1$ which descends to the quotient) is given by
$$
\tilde{A} = \ad(g^{-1}) A(\theta) + \Theta + \ad(g^{-1})\Phi(a + d\theta),
$$
where $\Theta$ is the Maurer-Cartan form on $G$.

\end{proposition}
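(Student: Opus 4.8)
The plan is to reduce the computation to the characterisation of the induced connection established in the proofs of Propositions \ref{prop:equiv-semi-conn} and \ref{P:LGxS^1 connection correspondences}, and then pull back along the explicit map of Proposition \ref{P:caloron direct}. Recall that the caloron transform is $\cC = \cS\circ\cK$: the functor $\cK$ sends $(P,(A,a),\Phi)$ to the $\rho$-equivariant $LG$-bundle $\ol P = P$ with connection $\bar A = A + a\Phi$, and $\cS$ sends this to $\ol P/\Omega G = \wt P$ with the unique connection $B$ satisfying $q^* B = \ev_0(\bar A)$, where $q\colon \ol P \to \ol P/\Omega G$ is the quotient and $\ev_0$ evaluates a loop at $0$. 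Proposition \ref{P:caloron direct} identifies $\wt P$ with $\ol P/\Omega G$ via the map descending from $\widehat\psi(p,g,\theta) = R_\theta(pg) = p(g,\theta)$, with $g$ regarded as a constant loop. Writing $\pi\colon P\times G\times S^1 \to \wt P$ for the quotient, the form we must compute is therefore $\pi^*\tilde A = \widehat\psi^*\ev_0(\bar A)$.

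First I would split a tangent vector at $(p,g,\theta)$ into its three components along $P$, $G$ and $S^1$ and evaluate $\widehat\psi^*\ev_0(\bar A)$ on each. The two facts driving every computation are the $\rho$-equivariance $R_\theta^*\bar A = \rho_{-\theta}(\bar A)$, which in loop coordinates reads $\bar A(R_\theta(q))(dR_\theta\,\xi)(0) = \bar A(q)(\xi)(\theta)$ and so converts evaluation at $0$ into evaluation at $\theta$, and the principal $LG$-equivariance $R_k^*\bar A = \ad(k^{-1})\bar A$. For the $P$-component, right translation by the constant loop $g$ followed by $R_\theta$ gives $\ad(g^{-1})\bigl[\bar A(p)(\xi)(\theta)\bigr]$; since $\bar A = A + a\Phi$ and $\ad(g^{-1})$ of a constant loop acts pointwise, this is $\ad(g^{-1})A(\theta) + \ad(g^{-1})\Phi(\theta)\,a$.

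Next I would treat the $S^1$-component. Here $\widehat\psi(p,g,\cdot)$ traces out the circle orbit through $R_\theta(pg)$, so its derivative is the fundamental vector field $\delta$ of the circle action and the contribution is $\bar A(R_\theta(pg))(\delta)(0) = \Phi(pg)(\theta)$, using that $\Phi = \bar A(\delta)$ (as in the proof of Proposition \ref{prop:equiv-semi-conn}) together with the equivariance shift. The transformation rule of Definition \ref{def:Higgsfield} for $k=g$ a constant loop simplifies because $\rho_g^{-1}\partial\rho^{\vp}_g(0) = 0$ (a constant loop is fixed by rotation), giving $\Phi(pg) = \ad(g^{-1})\Phi(p)$ and hence the $d\theta$-contribution $\ad(g^{-1})\Phi(\theta)$. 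Finally, for the $G$-component, varying $g$ produces the fundamental vector field $\iota_{pg}(\Theta(\xi))$ of the constant loop $\Theta(\xi) = g^{-1}\xi$; pushing forward by $R_\theta$ and using $dR_\theta\,\iota_q(X) = \iota_{R_\theta(q)}(\rho_{-\theta}(X))$ together with $\bar A(\iota(X)) = X$ yields $\rho_{-\theta}(\Theta(\xi))(0) = \Theta(\xi)$, that is, exactly the Maurer--Cartan term $\Theta$. Summing the three contributions gives the stated formula.

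The main obstacle will be the careful bookkeeping of the chain rule through $\widehat\psi = R_\theta\circ R_g$ in the presence of the loop-evaluation conventions: one must keep straight that $g$ enters as a \emph{constant} loop (so that its rotation derivative vanishes and $\ad(g^{-1})$ commutes with evaluation), and that the circle action interchanges the point at which the loop is evaluated with the basepoint used by $\ev_0$. Once the two equivariance identities above and the relation $\Phi = \bar A(\delta)$ are in hand, each of the three contributions is a direct computation and no further input is needed.
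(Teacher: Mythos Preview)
Your proposal is correct and follows essentially the same approach as the paper: both reduce the problem to computing $\widehat\psi^*\ev_0(\bar A)$ with $\bar A = A + a\Phi$ via the identification of Proposition~\ref{P:caloron direct}, and then push tangent vectors through $\widehat\psi$ using the equivariance properties of $\bar A$. The only cosmetic difference is that the paper treats $(g,\theta)$ as a single element of $LG\rtimes_\rho S^1$ and writes $\widehat\psi_*(X,g\xi,x_\theta) = (R_{(g,\theta)})_*X + \iota_{p(g,\theta)}(\xi,x)$, applying the full semi-direct product equivariance at once, whereas you factor $\widehat\psi = R_\theta\circ R_g$ and handle the $P$-, $G$-, and $S^1$-directions separately using the $\rho$- and $LG$-equivariance of $\bar A$ individually.
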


\begin{proof}
Recall from Proposition \ref{P:caloron direct} that we have a bundle map
$$
\xymatrix@C=10ex{P \times G \times S^1 \ar[r]^{\widehat \psi} \ar[d]_{\tilde \pi} &\ol P \ar[d]^{\bar \pi}\\
		\dfrac{P \times G \times S^1}{\LGS}  \ar[r]^{\psi} & \ol  P / \Omega G}
$$
given by $\widehat \psi (p,g,\theta) = p(g, \theta).$ We know that the connection on the equivariant $LG$-bundle $\ol P$ is given in terms of the pair $((A,a), \Phi)$ on $P$ by $\bar A = A + a\Phi$ (Proposition \ref{prop:equiv-semi-conn}) and that the connection $B$ on $\ol P /\Omega G$ satisfies $\bar{\pi}^* B = \ev_0(\bar A) = \ev_0(A + a\Phi)$ (from the proof of Proposition \ref{P:LGxS^1 connection correspondences}). Therefore, since the diagram above commutes, to give the connection on $\wt P$ as a form on $P \times G \times S^1$ it is enough for us to calculate $\widehat{\psi}^* \ev_0(A + a\Phi)$. Note that if $Q$ is a principal $G$-bundle and $R \colon Q \times G \to Q$ is the right action of $G$ on $Q$ then for tangent vectors $X \in T_qQ$ and $g\xi \in T_gG$ (where $\xi \in \fg$ and here we are identifying $T_gG$ with $\fg$ via left multiplication on $G$) the push-forward $R_* \colon T_qQ \times T_gG \to T_{qg}Q$ is given (using the Liebnitz rule) by $(X, g\xi) \mapsto (R_g)_* X + \iota_{qg} \xi$. In this case, if $(X, g\xi , x_\theta)\in T_{(p,g,\theta)}(P\times G \times S^1)$ is a tangent vector then the derivative of $\hat \psi$ is given by
$$
\widehat{\psi}_* (X, g\xi , x_\theta) = (R_{(g,\theta)})_*X + \iota_{p(g,\theta)}(\xi, x),
$$
where $x\in \RR$ is the Lie algebra element corresponding to the tangent vector $x_\theta \in T_\theta S^1$. (Note that here we are using the fact that $\ol P$ is really the same as the bundle $P$.) We have
\begin{align*}
(\widehat{\psi}^* \ev_0(A+a\Phi)& )_{(p,g,\theta)}(X, g\xi, x_\theta)\\
	&= \ev_0(A+a\Phi)_{p(g,\theta)}((R_{(g,\theta)})_*X + \iota_{p(g,\theta)}(\xi, x))\\
	&=\ev_0(A+a\Phi)_{p(g,\theta)}((R_{(g,\theta)})_*X) + \ev_0(A+a\Phi)_{p(g,\theta)}(\iota_{p(g,\theta)}(\xi, x))\\
	&=\ad(g, \theta)^{-1}\ev_0(A+a\Phi)_{p} (X) + \ev_0(\xi + x \Phi(p(g,\theta)))\\
	&=\rho_{-\theta}\left( \ad(g^{-1})A(X)(0) + a(X) \ad(g^{-1})\Phi(p)(0) \right) \\
	&\phantom{=\rho_{-\theta}( \ad(g^{-1})A(X)(0) + a(X)}+ \xi + x\rho_{-\theta}\left( \ad(g^{-1}) \Phi(p)(0) \right)\\
	&= \ad(g^{-1})A(X)(\theta) + a(X) \ad(g^{-1})\Phi(p)(\theta) \\
	&\phantom{=\rho_{-\theta}( \ad(g^{-1})A(X)(0) + a(X)}+ \xi + x \ad(g^{-1}) \Phi(p)(\theta).
\end{align*}
That is,
$$
\widehat{\psi}^* \ev_0(A + a\Phi) = \ad(g^{-1}) A(\theta) + \Theta + \ad(g^{-1})\Phi(a + d\theta).
$$

\end{proof}


\section{The string class}\label{S:MS03}

\subsection{The lifting problem}
In this paper, we shall be primarily concerned with the so-called lifting problem for a principal bundle whose structure group has a central extension. (In particular, we will be concerned with the central extension of the semi-direct product $\LGS$.)

Suppose we are given a $K$-bundle $P \to M$ and a central extension 
\begin{equation}
\label{E:ce}
{U\vphantom{K}}(1) \to {\widehat{K}} \to {K}.
\end{equation}
 We would like to know when the bundle $P$ lifts to a $\widehat{K}$-bundle $\widehat{P}$.
If we denote by $\underline{H}$ the sheaf of smooth maps into a Lie group $H$ then the exact sequence 
\eqref{E:ce} gives rise to a corresponding short exact sequence of sheaves of groups and that in turn gives rise to a long  exact sequence in cohomology part of which is
$$
\cdots \to H^1(M, \underline{U\vphantom{K}}(1)) \to H^1(M, \underline{\widehat{K}}) \to H^1(M, \underline{K}) \to \cdots 
$$

Note, however, that since $K$ and $\widehat{K}$ are in general nonabelian, this is an exact sequence of \emph{pointed sets} rather than groups. Since $U(1)$ is central in $\widehat{K}$ we can extend this sequence one step to the right to obtain (see \cite{Brylinski:1993})
$$
H^1(M, \underline{U\vphantom{K}}(1)) \to H^1(M, \underline{\widehat{K}}) \to H^1(M, \underline{K}) \to H^2(M, \underline{U\vphantom{K}}(1)) 
$$
To define the coboundary map $H^1(M, \underline{K}) \to H^2(M, \underline{U\vphantom{K}}(1))$ take transition functions $g_{\a\b}$ for the $K$-bundle $P$ relative to some good cover $\{U_\a\}$ and lift them to maps $\hat{g}_{\a\b} \colon U_\a \cap U_\b \to \widehat{K}$ such that $p(\hat{g}_{\a\b}) = g_{\a\b}.$ These are our candidate transition functions for $\widehat{P}.$ However, transition functions are required to satisfy the cocycle condition $g_{\a\b}g_{\b\gamma} = g_{\a\gamma}$ on triple overlaps but the lifts $\hat{g}_{\a\b}$ only satisfy 
$$
\hat{g}_{\a\b} \hat{g}_{\b\gamma} = \epsilon_{\a\b\gamma}\hat{g}_{\a\gamma}
$$
for some $U(1)$-valued function $\epsilon_{\a\b\gamma}.$ It can be shown that the function $\epsilon_{\a\b\gamma}$ defines a cocycle in $H^2(M,\underline{U}(1))$ and this cocycle defines the image of $g_{\a\b}$ under the coboundary $H^1(M, \underline{K}) \to H^2(M, \underline{U\vphantom{K}}(1))$. We see that the $\hat{g}_{\a\b}$'s are transition functions precisely when this cocycle is trivial. Note that there is an isomorphism $H^2(M, \underline{U}(1)) \simeq H^3(M, \ZZ)$ induced by the exact sequence
$$
\ZZ \to \RR \to U(1).
$$
Therefore, under this isomorphism, we have that the obstruction to lifting $P$ to $\widehat{P}$ is a class in $H^3(M, \ZZ)$.

\subsection{The string class}

As a precursor to the problem we really want to consider---that of lifting a principal $\LGS$-bundle---we shall quickly describe the simpler case \cite{Murray:2003} of which our work is a generalisation.  Namely that of a \emph{string structure} for an $LG$-bundle. The loop group of a compact, simple, simply connected Lie group $G$ has a well-known central extension $\widehat{LG}$ (see \cite{Pressley-Segal} for details) and so the theory we have described in the previous section can be applied. In particular, given an $LG$-bundle $P \to M$ there is a class in $H^3(M),$ called the \emph{string class}, which represents the obstruction to lifting $P$ to an $\widehat{LG}$-bundle. If such a lifting exists (i.e.\! if the string class vanishes) then $P$ is said to have a string structure.  We shall always be interested in the image of the string class  in real cohomology which we shall  also call the  string class or occasionally the 
real string class.  Of course if $M$ has no three-dimensional torsion the string class and the real string class are equivalent information. 
String structures were first introduced by Killingback in \cite{Killingback:1987} as a string theory analogue of spin structures and studied further in \cite{Coquereaux:1989, Carey:1991, McLaughlin:1992, Murray:2003}. Our work is closely related to that in \cite{Murray:2003}. In that case,  the first author together with D.~Stevenson have given an explicit differential form based formula for the string class which we review. It involves a Higgs field for the bundle. A Higgs field for an $LG$-bundle is a map $\Phi \colon P \to L\fg$ which satisfies
$$
\Phi(p \gamma) = ad(\gamma^{-1}) \Phi (p) + \gamma^{-1} \partial \gamma,
$$
for all $\gamma \in LG$. They then prove

\begin{theorem}[\cite{Murray:2003}]\label{T:MS031}
Let $P \to M$ be a principal $LG$-bundle. Let $A$ be a connection on $P$ with curvature $F$ and let $\Phi$ be a Higgs field for $P.$ Then the real string class of $P$ is 
represented in de Rham cohomology by the form
$$
-\frac{1}{4\pi^2} \int_{S^1} \< F, \nabla \Phi\> \, d\theta,
$$
where $\<\, , \>$ is a suitably normalised invariant inner product on $\fg$ and
$$
\nabla \Phi = d\Phi + [A, \Phi] - \partial A.
$$
\end{theorem}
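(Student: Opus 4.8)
\emph{Proof proposal.} The plan is to realise the string class as the Dixmier--Douady class of the lifting bundle gerbe of the Kac--Moody central extension $U(1) \to \widehat{LG} \to LG$, and then produce a de Rham representative of its real image by equipping that gerbe with a connection and curving built out of $A$ and $\Phi$. Concretely, take $\pi \colon P \to M$ as the surjective submersion and on the fibre product $P^{[2]} = P \times_M P$ define $\tau \colon P^{[2]} \to LG$ by $p_2 = p_1\,\tau(p_1,p_2)$. Pulling back the central extension gives a $U(1)$-bundle $\tau^*\widehat{LG} \to P^{[2]}$, and the multiplication in $\widehat{LG}$ together with the cocycle identity $\tau(p_1,p_2)\tau(p_2,p_3) = \tau(p_1,p_3)$ makes this a bundle gerbe. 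By the lifting theory recalled above its Dixmier--Douady class is precisely the string class, so the real string class is represented by the three-curvature of any bundle gerbe connection and curving.

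Next I would build a bundle gerbe connection from $A$. The extension is governed by the Lie algebra cocycle $\omega(\xi,\eta) = \frac{1}{2\pi}\int_{S^1}\langle \xi, \partial\eta\rangle\,d\theta$, and the central extension's Maurer--Cartan data, combined with $A$, determines a bundle gerbe connection on $\tau^*\widehat{LG}$ whose curvature $F_\nabla \in \Omega^2(P^{[2]})$ is an expression in $\omega$ and $\tau^{-1}d\tau$. Since $\pi_2^*A = \ad(\tau^{-1})\pi_1^*A + \tau^{-1}d\tau$, this curvature can be rewritten entirely in terms of $\pi_1^*A$ and $\pi_2^*A$. The decisive step is then to find a curving, that is a two-form $f \in \Omega^2(P)$ with $\pi_1^*f - \pi_2^*f = F_\nabla$; once $f$ is found, $df$ is basic and descends to the closed three-form on $M$ representing the class.

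This curving is where the Higgs field is indispensable, and I expect it to be the main obstacle. A candidate built only from $A$, such as $\frac{1}{2\pi}\int_{S^1}\langle A,\partial A\rangle\,d\theta$, fails to descend: because $\partial$ is the loop derivative, $\partial A$ does not transform tensorially under the $LG$-action (the $\partial$ differentiates the inhomogeneous gauge term), so the naive $f$ does not solve the descent equation. The Higgs field cures this: its defining identity $\Phi(p\gamma) = \ad(\gamma^{-1})\Phi(p) + \gamma^{-1}\partial\gamma$ is exactly the transformation law of a connection in the loop-rotation direction, so $\Phi$ can be used to covariantise $\partial$. Incorporating $\Phi$ into the curving replaces the offending $\partial A$ by the covariant combination $\nabla\Phi = d\Phi + [A,\Phi] - \partial A$ and yields a genuine solution of $\pi_1^*f - \pi_2^*f = F_\nabla$.

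Finally I would compute $df$, verify it is horizontal and invariant hence of the form $\pi^*\omega_3$, and identify $\omega_3$ with $-\frac{1}{4\pi^2}\int_{S^1}\langle F,\nabla\Phi\rangle\,d\theta$ using the structure equation $F = dA + \frac{1}{2}[A,A]$, the invariance of $\langle\,,\rangle$, and integration by parts in $\theta$ to discard total loop-derivatives. The one genuinely delicate bit of bookkeeping is the constant $-\frac{1}{4\pi^2}$: one factor of $\frac{1}{2\pi}$ comes from the normalisation of the cocycle $\omega$ and the other from the passage between the integral Dixmier--Douady class and its real de Rham representative under $H^2(M,\underline{U}(1)) \cong H^3(M,\ZZ)$, with the overall sign fixed by orientation conventions.
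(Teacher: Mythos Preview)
Your proposal is correct and follows essentially the same route as the paper. Note, however, that this particular theorem is quoted from \cite{Murray:2003} and is not re-proved here; the paper instead carries out precisely the programme you outline for the more general $LG\rtimes_\rho S^1$ case (Theorem~\ref{T:LGxS^1string}), and remarks afterwards that setting $a=0$, $f=0$ recovers the $LG$ formula. The concrete steps there match yours: form the lifting bundle gerbe via $\tau\colon P^{[2]}\to LG$, pull back a connection $\nu$ on the central extension, correct $\tau^*\nu$ by a one-form $\epsilon$ with $\delta\epsilon=\tau^*\alpha$ to obtain a genuine bundle gerbe connection, compute its curvature $\tau^*R-d\epsilon$, and then produce a curving $B$ whose $\delta$ equals this. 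The curving is exactly of the shape you anticipate, $B=\frac{i}{4\pi}\int_{S^1}\bigl(\langle A,\partial A\rangle-2\langle F,\Phi\rangle\bigr)\,d\theta$ in the $LG$ case, with the $\langle F,\Phi\rangle$ term supplied by the Higgs field to absorb the non-tensorial piece left over from $\langle A,\partial A\rangle$; the three-curvature $dB$ then descends and gives the stated form after the Bianchi identity and integration by parts. Your identification of where the two factors of $1/2\pi$ arise is also in line with the paper's conventions.
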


Additionally in \cite{Murray:2003}, a formula is given for the real string class in terms of the Pontrjagyn class of the caloron transform of  $P$. This formula generalises Killingback's original result which states that in the special case that $P \to M$ is a loop bundle (so $P = LQ$ and $M = LX$ for $Q \to X$ a $G$-bundle) then the  string class of $P$ is given by pulling-back the first Pontrjagyn class of $Q, p_1(Q),$ to $LX \times S^1$ and integrating over the circle:
$$
s(LQ) = \int_{S^1} \ev^*p_1(Q),
$$
where we have written $s(P)$ for the string class of $P$ and $\ev \colon LX \times S^1 \to X$ is the evaluation map which evaluates a loop at a point in $S^1.$ In order to generalise this result to general $LG$-bundles (which are not necessarily loop bundles) Murray and Stevenson use the caloron correspondence to relate the real string class to a class on a $G$-bundle. The generalisation of Killingback's result then is

\begin{theorem}[\cite{Murray:2003}]\label{T:MS032}
If $\widetilde{P} \to M\times S^1$ is the $G$-bundle corresponding to the $LG$-bundle $P \to M$ then the real string class of $P$ is given by
$$
s(P) = \int_{S^1} p_1(\widetilde{P}).
$$
\end{theorem}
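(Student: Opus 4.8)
The plan is to reduce the statement to a Chern--Weil computation, using Theorem \ref{T:MS031} to pin down the left-hand side as a concrete de Rham form and a curvature calculation for the caloron transform to identify the right-hand side with the same form. First I would fix a connection $A$ on $P$ with curvature $F$ and a Higgs field $\Phi$, so that by Theorem \ref{T:MS031} the real string class $s(P)$ is represented by $-\frac{1}{4\pi^2}\int_{S^1}\langle F,\nabla\Phi\rangle\,d\theta$, with $\nabla\Phi = d\Phi + [A,\Phi] - \partial A$. Since $p_1(\widetilde P)$ is independent of the choice of connection on $\widetilde P$, I am free to compute it using the connection $\widetilde A$ that the caloron transform induces from the pair $(A,\Phi)$ (the pure-$LG$ specialisation of the connection written down in Proposition \ref{P:caloron connection}, obtained from the loop connection of Example \ref{ex:loop_connection} after evaluation). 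Writing $\widetilde F$ for its curvature, Chern--Weil theory then represents $p_1(\widetilde P)$ in de Rham cohomology by $-\frac{1}{8\pi^2}\langle\widetilde F\wedge\widetilde F\rangle$.

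The heart of the argument is to compute $\widetilde F$ in terms of $F$, $\Phi$ and the loop parameter, and then integrate $\langle\widetilde F,\widetilde F\rangle$ over the circle fibre of $M\times S^1\to M$. Decomposing the de Rham differential on $M\times S^1$ as $d_M + d\theta\,\partial_\theta$, the base-direction part of $\widetilde A$ contributes the $\theta$-evaluation of $F$ to $\widetilde F$, while the $S^1$-direction contributes a term proportional to $d\theta$. The key claim is that this $d\theta$-component of $\widetilde F$ is exactly $\nabla\Phi$: the $d\Phi$ and $[A,\Phi]$ pieces come from differentiating the Higgs part of $\widetilde A$, while the $-\partial A$ term arises precisely from the loop-parameter derivative $\partial_\theta$ acting on the evaluated connection. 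Granting this, $\langle\widetilde F,\widetilde F\rangle = \langle F,F\rangle + 2\langle F,\nabla\Phi\rangle\wedge d\theta$, the pure $d\theta\wedge d\theta$ term vanishing, so fibre integration annihilates the first summand and leaves
$$
\int_{S^1} p_1(\widetilde P) = -\frac{1}{8\pi^2}\int_{S^1}\langle\widetilde F,\widetilde F\rangle = -\frac{1}{4\pi^2}\int_{S^1}\langle F,\nabla\Phi\rangle\,d\theta,
$$
which is precisely $s(P)$ by Theorem \ref{T:MS031}.

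I expect the main obstacle to be the curvature computation in the second paragraph, specifically matching the $d\theta$-component of $\widetilde F$ with $\nabla\Phi$ including the correct sign and the $-\partial A$ correction term; this requires care in tracking how the loop parameter enters the caloron connection and how $\partial_\theta$ interacts with the evaluation map $\ev_0$. A useful sanity check before the general computation is the loop-bundle special case $P = LQ$, $M = LX$, where $\widetilde P = \ev^*Q$ and the statement reduces to Killingback's transgression formula $s(LQ) = \int_{S^1}\ev^*p_1(Q)$; this both fixes the normalisation constants and confirms the overall sign, after which the general case follows by the same Chern--Weil argument with the caloron-induced connection in place of $\ev^*$.
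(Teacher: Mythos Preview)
Your proposal is correct. The paper itself merely cites Theorem~\ref{T:MS032} from \cite{Murray:2003} without reproducing a proof, but your argument is exactly the method the paper uses to establish the generalisation Theorem~\ref{T:LGxS^1Pont}: compute the curvature $\widetilde F$ of the caloron-transformed connection (there $\widetilde F = \ad(g^{-1})(F + f\Phi + \nabla\Phi\wedge(a+d\theta))$, which in the pure $LG$ case $a=f=0$ reduces to your decomposition), observe that only the $d\theta$-component survives fibre integration of $\langle\widetilde F,\widetilde F\rangle$, and match the result against the de~Rham representative of $s(P)$.
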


\section{Central extensions}

\subsection{Preliminaries}
\label{sec:Ralpha_LG}
In this section we shall review some general results from \cite{Murray:2001, Murray:2003} on constructing central extensions of Lie groups before discussing the case where we have the circle acting and central extensions of loop groups. As in \eqref{E:ce} let $ \widehat K$
be a central extension of $K$ by $U(1)$.  Choose a connection $A$ for $\widehat K \to K$ thought of as $U(1)$-bundle and denote its curvature by 
$R \in \Omega^2(K)$. Let $\hat m \colon \widehat K \times \widehat K \to \widehat K$ be the multiplication and denote 
$$
\delta(A) = \pi_1^*(A) - m^*(A) + \pi_2^*(A).
$$
It is shown in \cite{Murray:2001, Murray:2003} that $\delta(A) = \pi^*(\a) $ for $\a \in \Omega^1(K \times K)$
and that moreover $\d(R) = d\a$ and $\d(\alpha) = 0$. Here we use $\delta(K^p) \to \delta(K^{p+1})$ to 
denote the alternating sum
$$
\d = \sum_{i=0}^p (-1)^i d_i^*.
$$
where $d_i \colon K^{p+1} \to K^{p}$ is defined by 
$$
d_i (k_1, \ldots, k_{p+1}) = \begin{cases}
					(k_2,\ldots, k_{p+1}), 	& i=0\\
					(k_1, \ldots, k_{i-1}k_i, k_{i+1}, \ldots, k_{p+1}), &1\leq i \leq p-1\\
					(k_1, \ldots, k_p),		& i=p
					\end{cases}
$$

Conversely choose a pair of forms $(R, \a)$ such that $R \in  \Omega^2(K)$, $\a \in \Omega^2(K \times K)$, $dR = 0$, $2\pi i R $ is integral, 
$\delta(R) = d\a$ and $\delta(\a) = 0$.  Then we can explicitly construct a central extension $\widehat{K}\to K$ with connection from 
which we can recover $(R, \a)$ by the above construction.  Two pairs $(R, \a)$ and $(R', \a')$ give rise to the same central 
extension if and only if there  is a one-form $\eta \in \Omega^1(K)$ such that $R = R' + d\eta$ and $\a = \a' + \d \eta$. 

\begin{example}
\label{ex:Ralpha}
In the case that  $K$ is the loop group $LG$ the central extension whose class is the generator  
of $H^2(LG, \ZZ)$ is shown in  \cite{Murray:2003} to be determined by 
\begin{align}\label{E:R}
R &= \frac{i}{4\pi} \int_{S^1} \< \Theta, \partial \Theta \>\, d\theta,\\
\a &= \frac{i}{2\pi} \int_{S^1} \< d_2^* \Theta, d_0^*Z \> \, d\theta\label{E:alpha}
\end{align}
for $\Theta$ the Maurer-Cartan form on $G$. The bracket here is an invariant inner product on $\fg$ normalised so that the longest root has length squared equal to 2 and $Z$ is the function on $LG; \, \gamma \mapsto \partial \gamma \gamma^{-1}.$
\end{example}

\subsection{Circle actions and semi-direct products}

We specialise now to the case of $LG \rtimes_\rho S^1$ for the usual homomorphism $\rho \colon S^1 \to \Aut(LG)$. 
Let $\widehat{LG} \to LG$ be the standard central extension whose Chern class is a generator of $H^2(LG, \ZZ)$. 
We will show that there is a unique central extension of $LG \rtimes_\rho S^1$ whose Chern class is the class
pulled back to $H^2(LG \rtimes_\rho S^1, \ZZ)$.

For existence note that  $\widehat{LG} \to LG$ can be constructed explicitly \cite{Murray:2003} using the $(R, \a)$ in Example \ref{ex:Ralpha} and, moreover, these $(R, \a)$ are invariant under the action given by $\rho$. It follows that we can 
lift the action $\rho$ to an action $\hat\rho \colon S^1 \to \Aut(\widehat{LG})$ and then $\widehat{LG} \rtimes_{\hat\rho} S^1$ is the required central extension of $LG \rtimes_\rho S^1$.

For uniqueness we will show that any central extension $\cH \to K \rtimes S^1$
is determined by its Chern class as a $U(1)$-bundle.  Consider first the corresponding problem for the loop group $LG$.
The observant reader will notice that in \cite{Murray:2003} the central extension of $LG$ was defined
by choosing an $(R, \a)$ rather than taking one of the standard constructions and calculating $(R, \a)$.
The question, of course, is to find $\a$ as $R$ is essentially defined by specifying the Chern class of the 
central extension.  While still avoiding the actual calculation we can resolve the question of whether
we had the correct $(R, \a)$ in \cite{Murray:2003} from the following. 

\begin{proposition} If $\cK \to LG$ and $\cH \to LG$ are central extensions which have the same
Chern class in $H^2(LG, \ZZ)$ then they are isomorphic as central extensions. \end{proposition}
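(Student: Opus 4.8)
The plan is to run everything through the classification of central extensions of $K=LG$ by pairs $(R,\a)$ recalled in Section~\ref{sec:Ralpha_LG}, and to argue that the Chern class pins this pair down up to the equivalence $(R,\a)\sim(R+d\eta,\a+\delta\eta)$. Choosing connections on $\cK$ and $\cH$ gives pairs $(R_{\cK},\a_{\cK})$ and $(R_{\cH},\a_{\cH})$, whose curvatures $R_{\cK},R_{\cH}$ represent (a fixed multiple of) the real images of the two Chern classes. Equal Chern classes in $H^2(LG,\ZZ)$ therefore force $[R_{\cK}]=[R_{\cH}]$ in de Rham cohomology, so $R_{\cK}-R_{\cH}=d\eta_0$ for some $\eta_0\in\Omega^1(LG)$. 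Replacing $(R_{\cH},\a_{\cH})$ by the equivalent pair $(R_{\cH}+d\eta_0,\a_{\cH}+\delta\eta_0)$, I may assume $R_{\cK}=R_{\cH}=:R$. It then suffices to show that $\beta:=\a_{\cK}-\a_{\cH}$ equals $\delta\eta$ for some \emph{closed} $\eta\in\Omega^1(LG)$: for then $(R,\a_{\cK})=(R+d\eta,\a_{\cH}+\delta\eta)$, the two pairs agree, and $\cK\cong\cH$. Note that $\beta$ is closed, since $d\beta=\delta R_{\cK}-\delta R_{\cH}=0$, and $\delta$-closed, since $\delta\a_{\cK}=\delta\a_{\cH}=0$.

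Next I would reduce from forms to functions using the topology of the loop group. As $G$ is compact, simple and simply connected, $\pi_1(G)=\pi_2(G)=0$, so the evaluation fibration $\Omega G\to LG\to G$ gives $\pi_1(LG)=0$; hence $LG$ and all its powers are simply connected and each has vanishing $H^1$. Consequently the closed form $\beta$ is exact, $\beta=df$ with $f\in C^\infty(LG^2)$, and any closed $\eta$ is itself exact, $\eta=dg$ with $g\in C^\infty(LG)$. Since $d$ and $\delta$ commute, $d(\delta f)=\delta\beta=0$, so $\delta f$ is constant on the connected manifold $LG^3$; subtracting a constant from $f$ I may arrange $\delta f=0$ without disturbing $df=\beta$. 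The desired equation $\delta\eta=\beta$ then reads $d(\delta g-f)=0$, i.e.\ $\delta g-f$ is constant, and applying $\delta$ together with $\delta f=0$ forces that constant to vanish. Thus the whole statement collapses to solving $\delta g=f$ for the single $\delta$-closed function $f\in C^\infty(LG^2)$.

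The step I expect to be the genuine obstacle is exactly this last one. Solving $\delta g=f$ says precisely that the $\delta$-closed function $f$ is a coboundary in the bar complex $C^\infty(LG^\bullet)$, i.e.\ that the degree-two continuous group cohomology $H^2_{\mathrm{cont}}(LG,\RR)$ vanishes; equivalently, in the double complex $\Omega^p(LG^q)$ the simple connectivity $H^1(LG^q)=0$ annihilates the row that would otherwise obstruct the chase, leaving only this group-cohomology term. This is the one place where the soft part of the argument gives out and genuine information about the loop group must enter: the nontrivial Lie-algebra cocycle underlying $\widehat{LG}$ famously fails to integrate to a real-valued group cocycle, so the expected value of this cohomology is zero. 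I would establish the vanishing either by continuous-cohomology (van Est type) input for $LG$, or by invoking the standard classification of central extensions of loop groups, according to which the basic extension is determined up to isomorphism by its level \cite{Pressley-Segal} --- which in the present language is exactly the vanishing of $H^2_{\mathrm{cont}}(LG,\RR)$ we require. Feeding this back through the reductions above produces $g$, hence the closed $\eta$ with $\delta\eta=\beta$, and identifies $\cK$ with $\cH$ as central extensions.
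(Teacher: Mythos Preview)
Your reduction mirrors the paper's almost exactly: pass to a common curvature $R$, observe that the difference $\beta=\alpha_{\cK}-\alpha_{\cH}$ is closed and $\delta$-closed, and use the simple connectivity of $LG$ (hence of its powers) to trade the form $\beta$ for a function. The paper does the same, except that it stays with $U(1)$-valued rather than $\RR$-valued cochains: from $d(\alpha-\beta)=0$ and $\pi_1(LG\times LG)=0$ it writes $\alpha-\beta=\chi^{-1}d\chi$ for a smooth $\chi\colon LG\times LG\to U(1)$ normalised by $\chi(e,e)=1$, deduces from $\delta\alpha=\delta\beta=0$ that $\delta\chi$ is locally constant and hence identically $1$, and then argues directly from the cocycle identity that $\chi=\delta\mu$ for the explicit $\mu(g)=\chi(g,e)^{-1}$, setting $\eta=\mu^{-1}d\mu$.

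Where you diverge is precisely at this last step, and this is where your proposal has a gap. You correctly isolate the content as solving $\delta g=f$ for a $\delta$-closed $f$, i.e.\ the vanishing of $H^{2}_{\mathrm{sm}}(LG,\RR)$, but your two proposed routes do not close the argument. Invoking the Pressley--Segal classification ``the basic extension is determined by its level'' is the very statement under proof, so that citation is circular. The van Est route is in principle legitimate, but you do not carry it out, and the infinite-dimensional input required is nontrivial and not supplied elsewhere in the paper. The paper's intent is a bare-hands manipulation of the cocycle identity $\delta\chi=1$ to produce $\mu$ explicitly, with no appeal to continuous cohomology; your write-up would need either that computation or a genuine independent proof of the vanishing to be complete.
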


\begin{proof} We can assume that  $\cK \to LG$ and $\cH \to LG$ are the same $U(1)$-bundle with possibly
different multiplications so let us denote that by $\cK \to LG$. 
By choosing a connection on the bundle we can construct $(R, \a)$ and $(R, \b)$ characterising the two 
central extensions. We have $\delta(R) = d\a = d\b$ so that $d(\a - \b) = 0$. As $LG$ is simply-connected there is a $\chi \colon LG \times LG \to S^1$ such that $ \a = \b + \chi^{-1} d\chi$. Without loss of generality we can normalise so that $\chi(e, e) = 1$.  As $\delta(a) = \delta(b) = 0$ we have 
$\delta(\chi^{-1})d \delta(\chi) = 0 $ or $\delta(\chi)$ is a constant.  Evaluating at $(e, e, e)$ we find that 
$\delta(\chi) = 1$ or 
$$
\chi(g_1, g_2) = \chi(g_2, e)^{-1} \chi(g_1 , e)^{-1}
$$
so that letting $\mu(g) = \chi(g, e)^{-1}$ we see that $\delta(\mu) = \chi$. If we let 
$\eta = \mu^{-1} d\mu$ we now have $R = R + d\mu$ and $\a = \b + \delta \eta$ so the 
the central extensions are isomorphic. 
\end{proof}

 Notice that any automorphism of $\widehat{LG}$ fixes the centre so descends to an 
automorphism of $LG$ so that we have  a homomorphism $\Aut(\widehat{LG}) \to \Aut(LG)$.  Let 
 $\cH \to LG \rtimes_\rho S^1$ be a central extension whose Chern class is the pull-back of the Chern class
 of $\widehat{LG} \to LG$.  The result above shows that the restriction of  $\cH$ to $LG$ 
 is isomorphic, as a central extension, to $\widehat{LG} \to LG$. Hence we have  a short exact sequence of
 central extensions of the form:

\begin{equation*} 
\label{eq:fibre-integration} 
\xy 
(-55,7.5)*+{\widehat{LG}}="1"; 
(-35,7.5)*+{\cH}="2"; 
(-15,7.5)*+{S^1}="3"; 
(-55,-7.5)*+{LG}="4"; 
(-35,-7.5)*+{LG \rtimes_\rho S^1}="5"; 
(-15,-7.5)*+{S^1}="6"; 
{\ar "1";"2"};
{\ar "2";"3"};
{\ar "4";"5"};
{\ar "5";"6"};
{\ar "1";"4"};
{\ar "2";"5"};
{\ar "3";"6"}
\endxy
\end{equation*}

The map $z \mapsto (1, z)$ defines an homomorphism $S^1 \to LG \rtimes_\rho S^1$ and pulling 
back $\cH \to LG \rtimes_\rho S^1$ defines a central extension of the circle by the circle. This 
is trivial by

\begin{proposition} 
Any central extension of the circle by the circle is trivial.
\end{proposition}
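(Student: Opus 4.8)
The plan is to show that any such extension $U(1)\to E\to S^1$ is not merely topologically trivial but splits as a sequence of Lie groups. First I would record the easy structural facts: since $U(1)$ and $S^1$ are compact and connected, so is the total group $E$, and $\dim E = 2$.

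The first genuine point is to prove that $E$ is abelian. Because the extension is central, the commutator $(a,b)\mapsto aba^{-1}b^{-1}$ descends to a well-defined continuous map $c\colon S^1\times S^1\to U(1)$ on the quotient, and one checks directly that it is bi-multiplicative. For fixed $y$ the map $x\mapsto c(x,y)$ is a continuous homomorphism $S^1\to U(1)$, hence of the form $x\mapsto x^{m(y)}$ with $m(y)\in\ZZ$; this integer is locally constant in $y$ (it is a winding number) and therefore constant, since $S^1$ is connected. Bi-multiplicativity in the second variable then forces $x^{m}=x^{2m}$ for all $x$, so $m=0$ and $c$ is trivial. Hence $E$ is a compact, connected, abelian Lie group of dimension $2$, i.e.\ a $2$-torus $E\cong\TT^2$, with $U(1)$ sitting inside as a subtorus.

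Next I would pass to fundamental groups. The homotopy exact sequence of the fibration $U(1)\to E\to S^1$, together with $\pi_2(S^1)=0$ and the connectedness of $U(1)$, yields a short exact sequence of free abelian groups
$$0\to\pi_1(U(1))\to\pi_1(E)\xrightarrow{p_*}\pi_1(S^1)\to 0,$$
that is, $0\to\ZZ\to\ZZ^2\to\ZZ\to 0$. Since the quotient $\ZZ$ is free, this sequence of lattices splits: there is $\sigma\colon\ZZ\to\ZZ^2$ with $p_*\circ\sigma=\id$. As continuous homomorphisms between tori correspond exactly to the induced maps of fundamental groups, $\sigma$ is realised by a one-parameter subgroup $s\colon S^1\to E$ with $s_*=\sigma$. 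Then $p\circ s\colon S^1\to S^1$ induces the identity on $\pi_1$, and a self-homomorphism of the circle inducing the identity on $\pi_1$ is the identity, so $p\circ s=\id$. Thus $s$ is a homomorphic splitting and the extension is trivial.

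The main obstacle, I expect, is upgrading triviality at the topological and infinitesimal levels to an honest splitting of groups. The underlying $U(1)$-bundle is trivial because $H^2(S^1;\ZZ)=0$, and the sequence of Lie algebras $0\to\RR\to\RR^2\to\RR\to 0$ splits automatically; but neither fact by itself produces a homomorphic section. The substantive input is the commutator-pairing computation forcing $E$ to be abelian, which is where one genuinely uses that both base and fibre are circles; once that is in hand, the splitting of the cocharacter lattices finishes the argument routinely.
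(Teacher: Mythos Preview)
Your argument is correct, but it follows a genuinely different line from the paper's proof. The paper works directly with differential geometry: it chooses a right-invariant connection on $H\to S^1$, horizontally lifts the universal cover $\RR\to S^1$ to a map $f\colon\RR\to H$ with $f(0)=e$, and uses uniqueness of solutions to ODEs (applied to the two horizontal lifts $t\mapsto f(t+s)$ and $t\mapsto f(t)f(s)$) to conclude that $f$ is a homomorphism. The only defect is that $f(2\pi)$ may be a nontrivial element of the central $U(1)$, and this is corrected by multiplying $f$ by a homomorphism $h\colon\RR\to U(1)$ with $h(2\pi)=f(2\pi)^{-1}$; centrality makes $hf$ a homomorphism descending to the desired splitting $S^1\to H$.

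By contrast, you first force $E$ to be abelian via the commutator pairing, then invoke the structure theory of tori to turn a splitting of fundamental groups into a homomorphic splitting. Your route proves a little more (it identifies $E$ explicitly as a $2$-torus) and generalises cleanly to central extensions of tori by tori, at the cost of importing the correspondence between torus homomorphisms and lattice maps. The paper's route is more self-contained and avoids any classification result, needing only the existence of invariant connections and basic ODE uniqueness; it is also closer in spirit to the connection-and-curvature methods used elsewhere in the paper.
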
 

\begin{proof} 
For convenience let us denote the central extension by 
$$ 
 U(1) \to H \to S^1 
$$
where $U(1) \subset H$ is in  the centre of $H$. We prove the existence of a splitting homomorphism $S^1 \to H$. 

First choose a right $H$ invariant connection on $H \to S^1$ by translating around some splitting
of the tangent space at $e$.  Consider the homomorphism $\RR \to S^1$ and lift it to   a horizontal map
$f \colon  \RR \to H$ with $f(0) = e$.   Uniqueness for ordinary differential equations and the 
fact that $ t\mapsto f(t + s)f(s)^{-1}$ is a horizontal lift as well shows that $f$ is a homomorphism. 

 We would like $f \colon \RR \to H$ to descend to $ S^1 \to  H$ but generally we will have $f(2\pi) \neq e$.   Choose 
$h : \RR \to  U(1)$ a homomorphism with $h(2\pi) = f(2\pi)^{-1}$.  Then, because $U(1)$ is central
$hf$  is a homomomorphism which gives the required splitting.
\end{proof}

It follows that we have a homomorphism $\sigma \colon S^1 \to \cH$ and an induced $\hat\rho \colon S^1 \to \Aut(\widehat{LG})$ making $\cH$ a semi-direct product $\widehat{LG} \rtimes_{\hat\rho} S^1$.  Uniqueness follows from the uniqueness of $\hat \rho$ which, in turn, follows from:

\begin{proposition}
 The homomorphism $\Aut(\widehat{LG}) \to \Aut(LG)$ is injective.
 \end{proposition}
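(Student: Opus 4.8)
The plan is to show that every automorphism $\psi$ of $\widehat{LG}$ lying in the kernel of $\Aut(\widehat{LG}) \to \Aut(LG)$ is the identity. By definition such a $\psi$ induces the identity on $LG = \widehat{LG}/U(1)$, so for every $\hat g \in \widehat{LG}$ the elements $\psi(\hat g)$ and $\hat g$ have the same image in $LG$; since the kernel of $\widehat{LG} \to LG$ is $U(1)$, this says $\psi(\hat g)\hat g^{-1} \in U(1)$. Hence I can define a smooth map $\lambda \colon \widehat{LG} \to U(1)$ by $\psi(\hat g) = \lambda(\hat g)\,\hat g$. First I would check that $\lambda$ is a group homomorphism: expanding $\psi(\hat g \hat h) = \psi(\hat g)\psi(\hat h)$ and using that $U(1)$ is central to move $\lambda(\hat h)$ past $\hat g$ gives $\lambda(\hat g \hat h) = \lambda(\hat g)\lambda(\hat h)$. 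Thus proving $\psi = \id$ is equivalent to proving $\lambda \equiv 1$, i.e.\ that $\widehat{LG}$ admits no nontrivial continuous character into $U(1)$.

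Next I would reduce this to a statement about the Kac--Moody algebra. Since $\widehat{LG}$ is a $U(1)$-bundle over the connected group $LG$ with connected fibre, it is connected, so the continuous homomorphism $\lambda$ is determined on all of $\widehat{LG}$ by its derivative $d\lambda \colon \widehat{L\fg} \to i\RR$, and $\lambda \equiv 1$ as soon as $d\lambda = 0$. As $i\RR$ is abelian, $d\lambda$ vanishes on commutators, so it suffices to show that $\widehat{L\fg}$ is \emph{perfect}, that is $[\widehat{L\fg}, \widehat{L\fg}] = \widehat{L\fg}$.

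The one genuinely substantive step is verifying this perfectness, and it splits into two observations. Because $\fg$ is simple we have $[\fg,\fg] = \fg$, and applying the bracket pointwise shows that $L\fg$ is perfect; consequently the image of $[\widehat{L\fg},\widehat{L\fg}]$ under the projection $\widehat{L\fg} \to L\fg$ is all of $L\fg$. It then remains only to see that the central generator $c$ itself lies in the commutator subalgebra, and this is exactly where the nontriviality of the defining cocycle enters: for the bracket $[X,Y]_{\widehat{L\fg}} = [X,Y]_{L\fg} + \c(X,Y)\,c$ determined by the form $R$ of Example \ref{ex:Ralpha}, the cocycle $\c$ is nonzero, so some pair $X,Y \in L\fg$ has $\c(X,Y) \neq 0$ and hence $c \in [\widehat{L\fg},\widehat{L\fg}]$. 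Combining the two, $\widehat{L\fg}$ is perfect, so $d\lambda = 0$, whence $\lambda \equiv 1$ and $\psi = \id$.

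The main obstacle is precisely this last point, namely knowing that the central extension of $L\fg$ underlying $\widehat{LG}$ is nontrivial so that $c$ is a commutator; but this is guaranteed by our choice of $(R,\a)$ with $R$ representing a generator of $H^2(LG,\ZZ)$, so in the present setting it presents no real difficulty. I would therefore expect the bulk of the writeup to be the routine verifications that $\lambda$ is a well-defined continuous homomorphism and that pointwise bracketing gives perfectness of $L\fg$, with the cocycle argument supplying the central generator.
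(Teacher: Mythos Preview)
Your argument is correct and reaches the same conclusion as the paper, but by a genuinely different route. Both proofs begin identically: an automorphism in the kernel has the form $\hat g \mapsto \hat g\,\chi(\hat g)$ for a character $\chi \colon \widehat{LG} \to U(1)$, and the task becomes showing $\chi \equiv 1$. From there the arguments diverge. The paper stays at the group level: it restricts $\chi$ to the central $U(1)$, where it is $z \mapsto z^p$, and for $p \neq 0$ observes that $\ker\chi$ contains the commutator subgroup of $\widehat{LG}$, which surjects onto $LG$ since $LG$ is perfect (citing Pressley--Segal); hence $\ker\chi \to LG$ is a reduction of the $U(1)$-bundle $\widehat{LG}$ to $\ZZ_p$, forcing the Chern class to be torsion, a contradiction. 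You instead pass to the Lie algebra, show $\widehat{L\fg}$ is perfect, deduce $d\chi = 0$, and use connectedness of $\widehat{LG}$ to conclude $\chi \equiv 1$. Your route is more self-contained (you prove perfectness of $L\fg$ directly rather than quoting it) and purely algebraic; the paper's is topological and sidesteps any appeal to the Lie correspondence for infinite-dimensional groups.

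One small imprecision worth tightening: the step ``$\c(X,Y) \neq 0$ for some $X,Y$, hence $c \in [\widehat{L\fg},\widehat{L\fg}]$'' is not valid as written. If $\c$ were a nonzero \emph{coboundary}, say $\c(X,Y) = \phi([X,Y])$ for some $\phi \in (L\fg)^*$, then the commutator ideal would be exactly the graph $\{\xi + \phi(\xi)c : \xi \in L\fg\}$ and would not contain $c$. What you actually need---and what you correctly invoke in your final paragraph---is that the Lie-algebra extension is \emph{nontrivial}, i.e.\ that $\c$ is not cohomologous to zero; this is what your appeal to $R$ generating $H^2(LG,\ZZ)$ guarantees, and it is precisely the hypothesis that forces $c$ into the commutator ideal.
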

 
 \begin{proof} Assume that $\phi \in \Aut(\widehat{LG})$ acts trivially on $LG$. It follows that 
 for any $\hat g \in \widehat{LG}$ we have $\phi(\hat g) = \hat g \chi(g)$ for some homomorphism
 $\chi \colon \widehat{LG} \to U(1)$. If we restrict the homomorphism $\chi$ to $U(1) \subset \widehat{LG}$
 then it must take the form $\chi(z) = z^p$ for some $p \in \ZZ$. If $p=0$ we are done. Otherwise consider the 
 kernel $K \subset \widehat{LG}$ of $\chi$. From \cite{Pressley-Segal} we know that $LG$ is equal to its own 
 commutator so that $ [\widehat{LG}, \widehat{LG}] \subset \widehat{LG}$. But 
 $ [\widehat{LG}, \widehat{LG}] \subset K$ so that  $K$  covers $LG$. Thus $K$ is a reduction of $\widehat{LG}$
 to $\ZZ_p$ and the Chern class of $\widehat{LG}$ is torsion which is a contradiction. So $K = \{e\}$. 
\end{proof}

\subsection{Central extension of loop group and semi-direct product}
In what follows we will require an explicit construction of the central extension of $LG\rtimes_\rho S^1$ in a manner
analogous to that in Section \ref{sec:Ralpha_LG}.  However that  construction needs a slight modification because
$LG\rtimes_\rho S^1$ is not simply-connected.   This is done \cite{{Murray:2003}} by  replacing the two-form $R$ with a differential character \cite{Cheeger:1985} for the bundle $\widehat{K} \to K$. That is, we add to our pair $(R, \a)$ a homomorphism $h\colon Z_1(K) \to U(1)$ satisfying
$$
h(\partial \sigma) = \exp \left( \int_\sigma R \right)
$$
for every two-cycle $\sigma$ in $K.$ We also require the compatibility condition
$$
(\d h) (\gamma) = \exp \left( \int_\gamma \a \right)
$$
for every closed one-cycle $\gamma$ in $K \times K.$
Therefore, we need to find a triple of objects $(R, \a, h)$ as above. 

Following from the result in the previous section we take 
as our $R$  the pull-back of the form (\ref{E:R}) above to $\LGS.$ That is,
$$
R = \frac{i}{4\pi} \int_{S^1} \< \Theta, \partial \Theta \>\, d\theta.
$$
As noted before, since we are integrating over the circle this expression is invariant under the action $\rho$. Now, to find $\a$ we need to calculate $\d R = \pi_1^*R - m^*R + \pi_2^*R,$ where $\pi_i$ is the projection $\LGS \times \LGS \to \LGS$ which omits the $i^\text{th}$ factor and $m$ is the multiplication defined above. The pull-back $\pi_i^*R$ is given by
$$
\frac{i}{4\pi} \int_{S^1} \< \pi_i^*\Theta, \partial \pi_i^*\Theta \>\, d\theta
$$
and so it remains to calculate $m^*R.$ From now on we shall write $\Theta_1$ for $\pi_2^* \Theta$ and so on. Using the fact that $R$ is $\rho$-invariant as well as the $\ad$-invariance of the inner product and the identity
\begin{equation}\label{E:d(ad)}
\partial \left(\ad(\gamma^{-1}) X \right) = \ad(\gamma^{-1})[X, Z] + \ad(\gamma^{-1}) \partial X,
\end{equation}
for a tangent vector $X$ and $Z$ the function on $LG$ defined above, we have
\begin{multline*}
m^*R = \frac{i}{4\pi} \int_{S^1} \left\< [\Theta_1, \Theta_1], Z_2 \right\>
+ \left\< \Theta_1, \partial \Theta_1 \right\>
+ \left\< \Theta_2, \partial \Theta_2 \right\>\\
+ 2 \left\< \ad(\gamma_2^{-1}) \Theta_1, \partial \Theta_2^{\rho_2} \right\>
- 2\left\< \mu_1 \ad(\gamma_2^{-1}) Z_2, \partial \Theta_2^{\rho_2} \right\>
- 2\left\< \mu_1 Z_2, \partial \Theta_1 \right\> d\theta,
\end{multline*}
where $\mu$ is the Maurer-Cartan form on $S^1$ and we have written (for example) $\xi_1^{\rho_1}$ for $\rho_{\phi_1}(\xi_1)$. Therefore
\begin{multline*}
\d R = \frac{i}{2\pi} \int_{S^1} -\tfrac{1}{2} \left\< [\Theta_1, \Theta_1], Z_2 \right\>
- \left\< \ad(\gamma_2^{-1}) \Theta_1, \partial \Theta_2^{\rho_2} \right\>\\
+ \left\< \mu_1 \ad(\gamma_2^{-1}) Z_2, \partial \Theta_2^{\rho_2} \right\>
+ \left\< \mu_1 Z_2, \partial \Theta_1 \right\> d\theta.
\end{multline*}
Now define
\begin{equation}\label{E:alphaLGS}
\a = \frac{i}{2\pi} \int_{S^1} \big\< \pi_2^*\Theta^{\rho^{-1}} \! - \tfrac{1}{2} \pi_2^*\mu \, \pi_1^*Z, \pi_1^*Z \big\> \, d\theta.
\end{equation}
Then it is easy to check that $d\a = \d R$ and $\d\a = 0.$ Notice that the $2$-form $R$ is left invariant and the $1$-form $\a$ is left invariant in the first slot. To find the homomorphism $h\colon Z_1(\LGS) \to U(1)$ we note that since $\pi_1(\LGS) = \ZZ$ any cycle $a \in Z_1(\LGS)$ can be written as $n\gamma + \partial \sigma,$ for some two-cycle $\sigma,$ where $\gamma$ is the generator of $H_1(\LGS),$ a loop around the $S^1$ factor. It is easy to see that the integral of $\a$ over the generators of $H_1(\LGS \times \LGS)$ vanishes, that is,
$$
\int_{\gamma_1} \a = 0 = \int_{\gamma_2} \a
$$
for $\gamma_1, \gamma_2$ loops around the first and second $S^1$ factors respectively. This suggests that we define
\begin{equation}\label{E:h}
h(a) = h(\partial \sigma) = \exp \left( \int_\sigma R \right).
\end{equation}
This is well defined since if $a = n\gamma + \partial \sigma = n \gamma + \partial \sigma'$ then $\partial (\sigma - \sigma') = 0$ and so $\ds\int_{\sigma - \sigma'} R \in 2\pi i \ZZ$ (since $R$ is integral). Because the integral of $\a$ over the generators of $H_1(\LGS \times \LGS)$ vanishes, it is easy to check that for any one-cycle $\gamma$ we have
$$
(\d h) (\gamma) = \exp \left( \int_\gamma \a \right).
$$
Thus we have proven
\begin{proposition}
The triple $(R, \a, h)$ as above determines the central extension of the semi-direct product $\LGS.$
\end{proposition}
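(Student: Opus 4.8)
The plan is to invoke the general correspondence of \cite{Murray:2001, Murray:2003}, in the form modified above to accommodate a non-simply-connected group: a triple $(R, \a, h)$ consisting of a closed $R \in \Omega^2(\LGS)$ with $2\pi i R$ integral, a form $\a \in \Omega^1(\LGS \times \LGS)$ with $d\a = \d R$ and $\d\a = 0$, and a differential character $h \colon Z_1(\LGS) \to U(1)$ satisfying $h(\partial\sigma) = \exp(\int_\sigma R)$ together with the compatibility condition $(\d h)(\gamma) = \exp(\int_\gamma \a)$, explicitly determines a central extension with connection from which $(R, \a, h)$ may be recovered. The character $h$ replaces the bare two-form of the simply-connected case precisely because $\pi_1(\LGS) = \ZZ$ is non-trivial, so $R$ alone no longer pins down the extension.

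The proof then reduces to verifying that our specified triple satisfies these hypotheses, and all of these verifications have in fact been assembled above. For $R$ taken as the pull-back of (\ref{E:R}) from $LG$, closedness and the integrality of $2\pi i R$ are inherited from the corresponding facts for the generator of $H^2(LG, \ZZ)$ recorded in Example \ref{ex:Ralpha}, since integration over $S^1$ commutes with $d$ and preserves integrality. The relations $d\a = \d R$ and $\d\a = 0$ for the $\a$ of (\ref{E:alphaLGS}) are confirmed by the explicit evaluation of $\d R$ given above, which rests on the $\rho$-invariance of $R$, the $\ad$-invariance of the inner product, and the identity (\ref{E:d(ad)}). Finally, the well-definedness of $h$ and the compatibility condition are exactly the two checks completed in the preceding paragraphs, each turning on the fact that $\a$ integrates to zero over the generators $\gamma_1, \gamma_2$ of $H_1(\LGS \times \LGS)$.

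With the triple verified, the general construction yields a central extension $\cH \to \LGS$ with connection whose curvature is the pull-back of the generating class on $LG$. To see that this is \emph{the} extension we seek, I would note that its Chern class is then the pull-back of the Chern class of $\widehat{LG} \to LG$, so that by the uniqueness established earlier in this section $\cH$ must be the central extension $\widehat{LG} \rtimes_{\hat\rho} S^1$.

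The genuinely delicate step is the one already carried out above: finding the correct $\a$ solving $d\a = \d R$. Here $\d R = \pi_1^* R - m^* R + \pi_2^* R$ is controlled by $m^* R$, whose computation requires careful bookkeeping of the semi-direct product multiplication, the rotation action on loops, and the Maurer--Cartan form $\mu$ on the circle factor; the appearance of the cross term $-\tfrac{1}{2}\pi_2^*\mu\,\pi_1^* Z$ coupling the circle direction to the loop variable is the non-obvious feature of (\ref{E:alphaLGS}). Once $\d R$ is in hand this guess is confirmed by a direct calculation, and the remaining differential-character bookkeeping is comparatively routine given the vanishing of $\a$ on the circle generators.
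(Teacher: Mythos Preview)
Your proposal is correct and follows essentially the same approach as the paper: the proposition is stated as a summary of the verifications already carried out in the preceding paragraphs, and your write-up recapitulates precisely those checks (integrality and closedness of $R$, the identities $d\a = \d R$ and $\d\a = 0$, well-definedness of $h$, and the compatibility $(\d h)(\gamma) = \exp(\int_\gamma \a)$). Your final paragraph invoking the uniqueness result to identify the extension with $\widehat{LG}\rtimes_{\hat\rho}S^1$ is a small but welcome addition that the paper leaves implicit.
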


\section{Lifting bundle gerbes}
\label{sec:liftingbg}

In order to perform calculations with the differential forms in the previous section we shall utilise the theory of \emph{bundle gerbes}, introduced in \cite{Murray:1996}. 
Let $Y \xrightarrow{\pi} M$ be a surjective submersion.
\begin{definition}[\cite{Murray:1996}]
A \emph{bundle gerbe} over a manifold $M$ is a pair $(P, Y)$ where $Y \to M$ is a surjective submersion and $P \to Y^{[2]}$ is a $U(1)$-bundle and such that there is a \emph{bundle gerbe multiplication}, which is a smooth isomorphism
$$
m \colon \pi_3^* P \otimes \pi_1^* P \xrightarrow{\sim} \pi_2^* P
$$
of $U(1)$-bundles over $Y^{[3]}.$ Further, this multiplication is required to be associative whenever triple products are defined. We sometimes denote a bundle gerbe simply by $P.$
\end{definition}

We can characterise the bundle gerbe multiplication and its associativity in a different way using sections of bundles related to $P$ as follows. If $Q\to Y^{[p-1]}$ is a $U(1)$-bundle, define the bundle $\d Q \to Y^{[p]}$ as
$$
\d Q = \pi_1^*Q \otimes (\pi_2^* Q )^* \otimes \pi_3^* Q \otimes \ldots
$$
Then it is easy to show that $\d\d Q$ is canonically trivial. One can show that the bundle gerbe multiplication is equivalent to a section $s$ of $\d P \to Y^{[3]}$ and that the associativity condition is equivalent to the condition that $\d s = 1$ as a section of $\d\d P$ (where $1$ denotes the canonical section of $\d\d P$). Indeed if $p$ and $q$ are elements of $P_{(y_1, y_2)}$ and $P_{(y_2, y_3)}$ respectively, we can define a section $s$ of $\d P$ by
$$
s(y_1, y_2, y_3) = p\otimes m(p,q)^*\otimes q,
$$
then the associativity of $m$ forces the condition $\d s =1.$ 

If $\widehat{K}$ is a central extension of $K$ as before, then the specific bundle gerbe we are interested in is constructed as follows. Take the principal $K$-bundle $P \to M$ and consider the fibre product $P^{[2]} \rightrightarrows P$ then there is a natural map $\tau \colon P^{[2]} \to K,$ called the \emph{difference map}, given by $p_1 \tau(p_1, p_2) = p_2.$ If we view $\widehat{K}$ as a $U(1)$-bundle over $K$ then we can pull-back $\widehat{K}$ by this map to obtain a $U(1)$-bundle over $P^{[2]}:$
$$
\xymatrix{ \tau^* \widehat{K} \ar[r] \ar[d]	& \widehat{K} \ar[d]\\
			P^{[2]} \ar[r]^\tau			& K^{\vphantom{[2]}}}
$$
where
$$
\tau^* \widehat{K} = \left\{ (p_1, p_2, \hat{g}) \mid p(\hat{g}) = \tau(p_1, p_2) \right\}.
$$
Note that $\tau(p_1, p_2) \tau(p_2, p_3) = \tau(p_1, p_3)$ and so, because the multiplication in $\widehat{K}$ covers that in $K,$ we have an induced map
$$
\tau^*\widehat{K}_{(p_1, p_2)} \otimes \tau^*\widehat{K}_{(p_2, p_3)} \to \tau^*\widehat{K}_{(p_1, p_3)}
$$
which serves as a bundle gerbe multiplication for the bundle gerbe $(\tau^*\widehat{K}, P)$ over $M.$ This bundle gerbe is called the \emph{lifting bundle gerbe}.

Bundle gerbes have a characteristic class associated with them called the Dixmier-Douady class which in the case of the lifting bundle gerbe  is precisely the obstruction to lifting $P$ to a $\widehat{K}$-bundle.

One can write down a differential form representative of (the image in real cohomology of) the Dixmier-Douady class of a bundle gerbe in the same way that the curvature can be used to represent the Chern class of a $U(1)$-bundle. This requires a \emph{connection} and \emph{curving} for the bundle gerbe.

Consider first the $p$-fold fibred product $Y^{[p]}$ as before. Let $\Omega^q(Y^{[p]})$ denote the space of differential $q$-forms on $Y^{[p]}.$ Then we can define a map $\d : \Omega^q(Y^{[p]}) \to \Omega^q(Y^{[p+1]})$ as the alternating sum of pull-backs by the projections $\pi_i: Y^{[p+1]} \to Y^{[p]}$ which omit the $i$th element:
$$
\d = \sum_{i=1}^{p+1} (-1)^{i-1} \pi_i^*.
$$
Then $\d^2 = 0$ and so we have a complex
$$
0 \to \Omega^q(M) \xrightarrow{\pi^*} \Omega^q(Y) \xrightarrow{\,\d\,} \Omega^q(Y^{[2]}) \xrightarrow{\,\d\,} \Omega^q(Y^{[3]}) \xrightarrow{\,\d\,} \ldots
$$
In \cite{Murray:1996} it is proven that this complex has no cohomology. That is, the above sequence is exact for all $q \geq 0.$ We shall use this result shortly.

A bundle gerbe connection is a connection $A$ for the $U(1)$-bundle $P$ that respects the bundle gerbe product in the sense that the induced connection on $\pi_2^* P$ is the same as the image of the induced connection on $\pi_3^*P \otimes \pi_1^*P$ under the bundle gerbe multiplication. Equivalently using the section $s$ we have $s^*(\delta A) = 0$. 
If $F$ is the curvature of a bundle gerbe connection $A$ viewed as a 2-form on $Y^{[2]},$ then $\d F = s^* (\d dA) = d( s^* (\d A)) = 0.$ This means that there is some $B \in \Omega^2(Y)$ satisfying $F = \d B.$ A choice of such a $B$ is called a \emph{curving} for $P.$ Note that if $B'$ is another choice of curving then $B$ and $B'$ differ by a $\d$-closed (and hence $\d$-exact) 2-form on $Y$. As $\d$ and $d$ commute, we have that $\d (dB) = d(\d B) = dF = 0.$ Therefore there is a 3-form $H$ on $M$ such that $dB = \pi^* H$ (for $\pi$ the projection $Y \to M$). $H$ is called the \emph{3-curvature} of $P.$ It is closed and a different choice of $B$ or $H$ would result in a difference of an exact form. So $H$ defines a cohomology class in $H^3(M).$ It turns out that the 3-form $H/2\pi i$ is integral and that $H/ 2\pi i$ is a representative of the image of the Dixmier-Douady class of $P$ in real cohomology.

\section{The real string class of an $LG \rtimes_\rho S^1$-bundle}

\subsection{The real string class of an $LG \rtimes_\rho S^1$-bundle}
In the section \ref{S:MS03} we outlined the results from \cite{Murray:2003} which give a formula for the real string class of an $LG$-bundle and use the fact that $LG$-bundles over $M$ correspond to $G$-bundles over $M\times S^1.$ We have also showed that if one extends this to non-trivial $S^1$-bundles over $M$ one obtains an $\LGS$-bundle over $M$. In this section we present a formula for the image in real cohomology of the obstruction to lifting a principal $\LGS$-bundle $P$ to a principal $\widehat{LG} \rtimes_{\hat\rho} S^1$-bundle $\widehat{P},$ which we call the \emph{real string class} of $P.$

\subsubsection{A connection for the lifting bundle gerbe}
Now that we have a construction of the central extension of $\LGS$ in terms of the differential forms $R$ and $\a,$ we can consider the problem of lifting the $\LGS$-bundle $P \to M$ to an $\widehat{LG} \rtimes_{\hat\rho} S^1$-bundle $\widehat{P} \to M.$ We can write down the lifting bundle gerbe for this problem, that is, the bundle gerbe $(\tau^*(\widehat{LG} \rtimes_{\hat\rho} S^1), P)$ over $M,$ and we would like a connection on this bundle gerbe so we can calculate its Dixmier-Douady class. 

Consider, then, the difference map $\tau \colon P^{[2]} \to \LGS$ as in Section \ref{sec:liftingbg}. We can extend this to a map $\tau \colon P^{[k+1]} \to (\LGS)^k$ by defining
$$
\tau (p_1, \ldots, p_{k+1}) = (\tau(p_1, p_2), \ldots, \tau(p_{k}, p_{k+1})).
$$
It is shown in \cite{Murray:2003} that this kind of map is a simplicial map and thus we have commuting diagrams
 
\begin{equation*} 
\label{eq:tau} 
\xy 
(-65,7.5)*+{\Omega(P^{[p]})}="1"; 
(-30,7.5)*+{\Omega(P^{[p+1]})}="2";  
(-65,-7.5)*+{(\LGS)^{p-1}}="3"; 
(-30,-7.5)*+{(\LGS)^{p}}="4"; 
{\ar^\delta "1";"2"};
{\ar^\delta "3";"4"};
{\ar_\tau "1";"3"};
{\ar_\tau "2";"4"}
\endxy
\end{equation*}

Now consider a connection $\nu$ on $\widehat{LG} \rtimes_{\hat\rho} S^1$ (whose curvature is the form $R$). The natural choice for a bundle gerbe connection would be the pull-back, $\tau^*\nu,$ of this form to $\tau^*(\widehat{LG} \rtimes_{\hat\rho} S^1).$ However, $\tau^*\nu$ is not a bundle gerbe connection because it does not respect the product. That is, $s^*(\d\tau^*\nu)$ is non-zero. However, $\d(s^*(\d\tau^*\nu)) = 0$ and so there is some form $\epsilon$ on $P^{[2]}$ such that $\d \epsilon = s^*(\d\tau^*\nu).$ Then $\tau^*\nu - \epsilon$ will be a bundle gerbe connection on $\tau^*(\widehat{LG} \rtimes_{\hat\rho} S^1).$ In fact, in this case, since $\a = s^*(\d\nu)$ by definition, we have $s^*(\d \tau^*\nu) = \tau^*\a.$ So $\d(s^*(\d \tau^*\nu)) = \d \tau^*\a = \tau^* \d\a = 0$ as $\d\a = 0$ and so $\epsilon$ satisfies $\d\epsilon = \tau^* \a.$ Thus it suffices to find a 1-form $\epsilon$ on $P^{[2]}$ satisfying $\d\epsilon = \tau^* \a.$ In fact, it is possible to write $\epsilon$ in general in terms of $\a$ \cite{Stevenson:comm}. We shall now demonstrate how to do this. Let $P$ be a $K$-bundle with connection $A.$ Using the equation $p_1 \tau(p_1, p_2) = p_2$ and the Leibnitz rule, we find the identity
\begin{equation}\label{E:tau and A}
\pi_1^*A = \ad(\tau_{12}^{-1})\pi_2^* A + \tau_{12}^* \Theta,
\end{equation}
where we have written $\tau_{ij}$ for $\tau(p_i, p_j).$ For tangent vectors $(X_1, X_2, X_3)$ at $(p_1, p_2, p_3) \in P^{[3]},$ we can calculate
\begin{multline*}
(\d\a)_{(1, \tau_{12}, \tau_{23})}( A(X_1), \tau_{12}(X_1, X_2), \tau_{23}(X_2,X_3)) =\\
	\shoveleft{\phantom{(\d\a)}\a_{(\tau_{12},\tau_{23})}( \tau_{12}(X_1, X_2), \tau_{23}(X_2,X_3))}\\
	 - \a_{(\tau_{12},\tau_{23})}( m_*(A(X_1),\tau_{12}(X_1, X_2)), \tau_{23}(X_2,X_3))\\
	+ \a_{(1,\tau_{12}\tau_{23})}( A(X_1),m_*(\tau_{12}(X_1, X_2), \tau_{23}(X_2,X_3)))\\
	 - \a_{(1,\tau_{12})}(A(X_1),\tau_{12}(X_1, X_2)).
\end{multline*}
Notice that the first term above is actually $\tau^* \a.$ Since $\d\a = 0,$ we have
\begin{multline*}
(\tau^*\a)_{(p_1,p_2,p_3)} (X_1,X_2,X_3) =\\
	\a_{(\tau_{12},\tau_{23})}(m_*(A(X_1),\tau_{12}(X_1, X_2)), \tau_{23}(X_2,X_3))\\
	- \a_{(1,\tau_{12}\tau_{23})}(A(X_1),m_*(\tau_{12}(X_1, X_2), \tau_{23}(X_2,X_3)))\\
	+ \a_{(1,\tau_{12})}(A(X_1),\tau_{12}(X_1, X_2)).
\end{multline*}
Now, if we define $\epsilon$ in terms of $\a$ and $A$ as
$$
\epsilon_{(p_1,p_2)}(X_1, X_2) =\a_{(1, \tau_{12})}(A(X_1), \tau_{12}(X_1,X_2))
$$
then we have
\begin{multline*}
(\d\epsilon)_{(p_1, p_2,p_3)}(X_1,X_2,X_3) =\\
	\a_{(1, \tau_{23})}(A(X_2), \tau_{23}(X_2, X_3)) - \a_{(1, \tau_{13})}(A(X_1), \tau_{13}(X_1,X_3))\\
	+\a_{(1,\tau_{12})}(A(X_1), \tau_{12}(X_1,X_2)).
\end{multline*}
Using the fact that $\tau_{13} = \tau_{12}\tau_{23},$ we see
$$
\a_{(1, \tau_{13})}( A(X_1), \tau_{13}(X_1,X_2))=
\a_{(1,\tau_{12}\tau_{23})}( A(X_1),m_*(\tau_{12}(X_1, X_2), \tau_{23}(X_2,X_3)))
$$
and since $\a$ is left invariant in the first slot, and using equation (\ref{E:tau and A}) we have
\begin{align*}
\a_{(1, \tau_{23})}( & A(X_2), \tau_{23}(X_2, X_3))\\
			&= \a_{(\tau_{12}, \tau_{23})}( \tau_{12}A(X_2), \tau_{23}(X_2, X_3))\\
			&= \a_{(\tau_{12}, \tau_{23})}( \tau_{12}\ad(\tau_{12}^{-1})A(X_1) + \tau_{12}(X_1,X_2), \tau_{23}(X_2, X_3)),
\end{align*}
which equals
$$
\a_{(\tau_{12},\tau_{23})}( m_*(A(X_1),\tau_{12}(X_1, X_2)), \tau_{23}(X_2,X_3)).
$$
Thus we have $\d \epsilon = \tau^*\a.$

Consider now the $\LGS$-bundle $P.$ Choose a connection $(A, a)$ for $P,$ where $A$ and $a$ are $1$-forms on $P$ with values in $L\fg$ and $\RR$ respectively (as per Proposition \ref{P:(A,a)}). 
Given $(A, a)$ then, we can write down the $1$-form $\epsilon \in \Omega^1(P^{[2]})$ as above:
$$
\epsilon = \frac{i}{2\pi} \int_{S^1} \left\< \pi_2^* A - \tfrac{1}{2} \pi_2^*a \, \tau^*Z, \tau^*Z \right\> d\theta.
$$
It is easy to check that $\d \epsilon = \tau^* \a$ and so we have that $\tau^*\nu - \epsilon$ is a connection for the lifting bundle gerbe. Of course, we are concerned with finding a curving for this bundle gerbe and so we are really interested in calculating the curvature of this connection, given by $\tau^*R - d\epsilon.$ For the connection $(A, a),$ equation (\ref{E:tau and A}) reads
$$
(A_2,a_2) =\left( \rho_{-\tau_{S^1}}\left(\ad(\tau_{LG}^{-1})A_1 - a_1 \tau_{LG}^{-1} \partial\tau_{LG}^{\vphantom{-1}} \right) + \tau_{LG}^* (\rho_{-\tau_{S^1}}(\Theta) ), a_1 + \tau_{S^1}^*\mu \right)
$$
where we have written the difference map $\tau$ as $(\tau_{LG}, \tau_{S^1}).$ That is, $\tau_{LG}$ is the $LG$ part of $\tau$ and $\tau_{S^1}$ is the circle part. From now on, we will simply write $\tau$ and assume that it is clear from the context which part we mean. In particular, then, we have
\begin{equation}\label{E:tau and (A,a)}
\tau^* \rho_{-\tau}(\Theta) =A_2 - \rho_{-\tau}\left(\ad(\tau^{-1})A_1 + a_1\tau^{-1} \partial\tau \right).
\end{equation}
Note that here we have used the fact that the Maurer-Cartan form on $\LGS$ is not the pair $(\Theta, \mu)$ but in fact includes a rotation of $\Theta.$ So at the point $(\gamma, \phi),$ it is given by $(\rho_{\phi^{-1}}(\Theta), \mu).$ Using equation (\ref{E:tau and (A,a)}) and writing $A^\rho$ for $\rho(A)$ and so on as before, we calculate
\begin{multline*}
\tau^*R = \frac{i}{4\pi} \int_{S^1} \<A_2, \partial A_2 \> - 2\<A_2^\rho , \partial  (\ad(\tau^{-1})A_1) \> + 2\<A_2^\rho , a_1\partial (\tau^{-1} \partial\tau)\>\\
+ \< \ad(\tau^{-1})A_1 , \partial  (\ad(\tau^{-1})A_1)\> - 2\< \ad(\tau^{-1})A_1 , a_1 \partial  (\tau^{-1} \partial\tau)\> d\theta.
\end{multline*}
For $d\epsilon$ we have:
\begin{multline*}
d\epsilon = \frac{i}{2\pi} d \int_{S^1}\< A_1 - \tfrac{1}{2} a_1 \tau^*Z, \tau^*Z \>d\theta\\
\shoveleft{\phantom{d\epsilon} = \frac{i}{2\pi} \int_{S^1} \< dA_1, \tau^*Z\>
 - \< A_1, d(\tau^*Z)\> 
 - \tfrac{1}{2}\< da_1\tau^*Z, \tau^*Z\>
 + \< a_1 \tau^*Z, d(\tau^*Z)\> d\theta}\\
\end{multline*}
which, using the identity
\begin{equation}
d(\tau^*Z) = \ad(\tau)\partial (\tau^*\Theta^\rho)
\end{equation}
and equation (\ref{E:tau and (A,a)}), gives
\begin{multline*}
d\epsilon = \frac{i}{2\pi} \int_{S^1} \< dA_1, \tau^*Z\>
 - \< A_1, \ad(\tau) \partial A_2^\rho \>
  + \< A_1, \ad(\tau) \partial (\ad(\tau^{-1})A_1) \>\\
- \< A_1, a_1 \ad(\tau) \partial (\tau^{-1} \partial\tau)\>
 - \tfrac{1}{2} \<da_1\tau^*Z, \tau^*Z\>\\
+ \< a_1 \tau^*Z, \ad(\tau)\partial A_2^\rho \>
 - \< a_1 \tau^*Z, \ad(\tau)\partial (\ad(\tau^{-1})A_1) \> d\theta.
\end{multline*}
Therefore,
\begin{multline*}
\tau^*R - d\epsilon = \frac{i}{4\pi} \int_{S^1} \<A_2, \partial A_2 \>
-  2\< dA_1, \tau^*Z\>
- \< A_1, \ad(\tau)\partial (\ad(\tau^{-1})A_1) \>\\
+ 2\< a_1 \tau^{-1} \partial\tau, \partial(\ad(\tau^{-1})A_1)\>
+ \<da_1\tau^*Z, \tau^*Z\> d\theta,
\end{multline*}
using the $\ad$ invariance of the inner product and integration by parts. Then, using equation (\ref{E:d(ad)}) yields
\begin{multline*}
\tau^*R - d\epsilon = \frac{i}{4\pi} \int_{S^1} \<A_2, \partial A_2 \> 
- \< A_1 , \partial A_1\> 
-  2\< dA_1, \tau^*Z\> 
- \< [A_1, A_1],\tau^*Z \> \\
+ 2 \< \tau^*Z a_1, \partial A_1\> 
+ \<da_1\tau^*Z, \tau^*Z\> d\theta.
\end{multline*}
Note now that if $(F,f)$ is the curvature of the connection $(A,a)$ then we have
$$
(F,f) = (dA + \tfrac{1}{2} [A,A] - a\wedge \partial A, da).
$$
Therefore, the formula above for $\tau^*R - d\epsilon$ reads
$$
\tau^*R - d\epsilon = \frac{i}{4\pi} \int_{S^1} \left\<\pi_1^*A, \partial \pi_1^*A \right\> - \left\< \pi_2^*A , \partial \pi_2^*A \right\> - 2\left\< \pi_2^* F - \tfrac{1}{2} \pi_2^* f \, \tau^*Z, \tau^*Z\right\> d\theta.
$$

\subsubsection{A curving for the lifting bundle gerbe}

Recall that in order to find the 3-curvature of the lifting bundle gerbe, and hence a representative for the image in real cohomology of the Dixmier-Douady class, we need a curving for $\tau^*(\widehat{LG} \rtimes_{\hat\rho} S^1).$ That is, some 2-form $B$ on $P$ such that $\d B = \tau^* R - d\epsilon.$ Note that $\d = \pi_1^* - \pi_2^*$ and
\begin{equation}\label{tau*R - de}
\tau^*R - d\epsilon = \d \left(\frac{i}{4\pi} \int_{S^1} \left\< A, \partial A \right\> d\theta \right)
- \frac{i}{2\pi} \int_{S^1} \left\< \pi_2^* F - \tfrac{1}{2} \pi_2^* f \, \tau^*Z, \tau^*Z\right\> d\theta.
\end{equation}
To deal with the second term above, we need a Higgs field for the $\LGS$-bundle $P$.  Recall from Definition \ref{def:Higgsfield} that Higgs fields satisfy
$$
\Phi(p (\gamma, \phi)) = \rho_{-\phi} \left(\ad(\gamma^{-1}) \Phi(p) + \gamma^{-1} \partial \gamma \right).
$$

Note that this condition implies that
$$
\pi_1^*\Phi = \rho_{-\tau} \left( \ad(\tau^{-1}) \pi_2^* \Phi + \tau^{-1} \partial \tau \right)
$$
or simply,
\begin{equation}\label{E:Phi and tau}
\ad(\tau) \Phi_2^\rho = \Phi_1 + \tau^*Z.
\end{equation}
Using this, the second term in equation (\ref{tau*R - de}) becomes
$$
\frac{i}{2\pi} \int_{S^1} \left\< F_1 - \tfrac{1}{2} f_1 \, \tau^*Z, \ad(\tau) \Phi_2^\rho - \Phi_1 \right\> d\theta.
$$
Since $(F,f)$ is a curvature, it satisfies
$$
\pi_1^* (F,f) = \ad(\tau^{-1}) \pi_2^*(F,f).
$$
That is, $f_2 = f_1$ and
$$
F_2 = \rho_{-\tau} \left(\ad(\tau^{-1}) F_1 - f_1 \tau^{-1}\partial \tau \right),
$$
or
$$
\ad(\tau) F_2^\rho = F_1 -  f_1 \tau^*Z.
$$
Using this, we have
\begin{align*}
\frac{i}{2\pi} \int_{S^1} 	&\left\< F_1 - \tfrac{1}{2} f_1 \, \tau^*Z, \ad(\tau) \Phi_2^\rho - \Phi_1 \right\> d\theta\\
	& =  \d \left( \frac{i}{2\pi} \int_{S^1} \left< F, \Phi \right\> d\theta \right)
+ \frac{i}{4\pi} \int_{S^1} 2\left\< f_1\tau^*Z, \Phi_1 \right\> + \left\< f_1\tau^*Z, \tau^*Z \right\> d\theta.
\end{align*}
Therefore, $\tau^* R - d\epsilon$ is equal to
$$
\d \left( \frac{i}{4\pi} \int_{S^1} \left\< A, \partial A \right\> - 2 \left\< F, \Phi \right\> d\theta \right) - \frac{i}{4\pi} \int_{S^1} 2\left\< f_1\tau^*Z, \Phi_1 \right\> + \left\< f_1\tau^*Z, \tau^*Z \right\> d\theta.
$$
So it is enough to find a $B_2 \in \Omega^2(P)$ such that
$$
\d B_2 = \frac{i}{4\pi} \int_{S^1} 2\left\< f_1\tau^*Z, \Phi_1 \right\> + \left\< f_1\tau^*Z, \tau^*Z \right\> d\theta.
$$
It is easy to check that the form
$$
\frac{i}{4\pi} \int_{S^1} \left\< \Phi, f \Phi \right\> d\theta
$$
satisfies this. Therefore, we have
\begin{proposition}\label{P:curving}
A curving for the lifting bundle gerbe is given by
$$
B = \frac{i}{4\pi} \int_{S^1} \left\<A, \partial A \right\> - 2\< F + \tfrac{1}{2}f\Phi, \Phi\> \, d\theta.
$$
\end{proposition}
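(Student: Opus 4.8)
The plan is to check directly that the $B$ in the statement is a curving, i.e.\ that $\d B = \tau^*R - d\epsilon$, where $\tau^*R - d\epsilon$ is the curvature of the bundle gerbe connection $\tau^*\nu - \epsilon$ found above. Since that curvature has already been reduced to the form (\ref{tau*R - de}), in which the first summand is manifestly $\d$-exact, the whole task comes down to writing the remaining term
$$
-\frac{i}{2\pi}\int_{S^1}\left\<\pi_2^*F - \tfrac{1}{2}\pi_2^*f\,\tau^*Z,\tau^*Z\right\>\,d\theta
$$
as $\d$ of a $2$-form on $P$.

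First I would introduce the Higgs field $\Phi$ and use its equivariance (\ref{E:Phi and tau}), which reads $\ad(\tau)\Phi_2^\rho = \Phi_1 + \tau^*Z$, to replace the factor $\tau^*Z$ by $\ad(\tau)\Phi_2^\rho - \Phi_1$. Substituting, and then invoking the matching equivariance of the curvature $(F,f)$ under $\tau$ (that is, $f_2 = f_1$ together with $\ad(\tau)F_2^\rho = F_1 - f_1\tau^*Z$), should reorganise the remaining term into a $\d$-exact part plus a residual:
$$
\tau^*R - d\epsilon = \d\!\left(\frac{i}{4\pi}\int_{S^1}\<A,\partial A\> - 2\<F,\Phi\>\,d\theta\right) - D,
$$
where
$$
D = \frac{i}{4\pi}\int_{S^1} 2\<f_1\tau^*Z,\Phi_1\> + \<f_1\tau^*Z,\tau^*Z\>\,d\theta .
$$
It then remains to exhibit a $2$-form $B_2$ on $P$ with $\d B_2 = D$, since then $B = \frac{i}{4\pi}\int_{S^1}\<A,\partial A\> - 2\<F,\Phi\>\,d\theta - B_2$ is a curving. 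The natural candidate is $B_2 = \frac{i}{4\pi}\int_{S^1}\<\Phi,f\Phi\>\,d\theta$.

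I would then verify that this $B_2$ works by computing $\d B_2 = \pi_1^*B_2 - \pi_2^*B_2 = \frac{i}{4\pi}\int_{S^1}\big(\<\Phi_2,f_2\Phi_2\> - \<\Phi_1,f_1\Phi_1\>\big)\,d\theta$, using $f_2 = f_1$ to factor out the scalar curvature and then substituting for $\Phi_1$ from (\ref{E:Phi and tau}): the cross term produces $2\<f_1\tau^*Z,\Phi_1\>$ and the quadratic term produces $\<f_1\tau^*Z,\tau^*Z\>$, so that $\d B_2 = D$. Assembling then gives $\d B = \tau^*R - d\epsilon$, and rewriting $\<f\Phi,\Phi\> = 2\<\tfrac{1}{2}f\Phi,\Phi\>$ brings $B$ into the displayed form $\frac{i}{4\pi}\int_{S^1}\<A,\partial A\> - 2\<F + \tfrac{1}{2}f\Phi,\Phi\>\,d\theta$.

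The step I expect to be the main obstacle is this last verification that $\d B_2 = D$: it is not a formal cancellation but relies on both the $\ad$-invariance and the rotation- ($\rho$-) invariance of the inner product under $\int_{S^1}$, which are exactly what allow the $\ad(\tau)$ and $\rho_{\pm\tau}$ factors to be moved across the pairing so that $\<\Phi_2,\Phi_2\> - \<\Phi_1,\Phi_1\>$ collapses to precisely the two terms of $D$. Keeping the normalisations and the lone factor of $2$ straight through these manipulations is the only place where real care is needed.
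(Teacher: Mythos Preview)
Your proposal is correct and follows essentially the same route as the paper: reduce $\tau^*R - d\epsilon$ via (\ref{tau*R - de}), use the Higgs-field relation (\ref{E:Phi and tau}) and the curvature equivariance to split off the $\d$-exact piece $\frac{i}{4\pi}\int_{S^1}\<A,\partial A\> - 2\<F,\Phi\>\,d\theta$, and then check that $B_2 = \frac{i}{4\pi}\int_{S^1}\<\Phi,f\Phi\>\,d\theta$ absorbs the residual $D$. The only slip is expository: in the verification of $\d B_2 = D$ you should substitute for $\Phi_2$ (via $\ad(\tau)\Phi_2^\rho = \Phi_1 + \tau^*Z$), not for $\Phi_1$, so that the expansion of $\<\Phi_2,f_2\Phi_2\>$ produces the cross and quadratic terms in $\tau^*Z$ you name.
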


In \cite{Gomi:2003}, Gomi gives another method for calculating a curving for the lifting bundle gerbe, that of \emph{reduced splittings}. In the interest of brevity we omit a discussion of these results, however we note that a reduced splitting for the $\LGS$-bundle which leads to the same curving as in Proposition \ref{P:curving} can be found in \cite{Vozzo:PhD}.

\subsubsection{The real string class of an $\LGS$-bundle}

The last step now that we have found a curving for the lifting bundle gerbe is to calculate the $3$-curvature $H = dB.$ Then $H/2\pi i$ is integral and represents the real image of the Dixmier-Douady class of $\tau^*(\widehat{LG} \rtimes_{\hat\rho} S^1)$ (and hence the obstruction to lifting $P$). We have
$$
dB = \frac{i}{2\pi} \int_{S^1} \left\< dA, \partial A\right\> - \left\< dF, \Phi\right\> - \left\< F, d\Phi\right\> - \left\< d\Phi, f\Phi \right\> d\theta.
$$
To proceed further, we require the Bianchi identity for $(F, f).$ Note that
$$
d(F, f) = ( [dA, A] - f \wedge \partial A + a \wedge \partial (dA) , d^2 a).
$$
In particular, this means that
$$
dF = [F, A] - f \wedge \partial A + a \wedge \partial F,
$$
since $ [F,A] = [dA, A] -[a\wedge \partial A,A]$ and $ a\wedge \partial F  = a \wedge \partial (dA) + [a\wedge \partial A, A].$
Using this, and the fact that $\int_{S^1}\<[A, A], \partial A\> d\theta$ and  $\< a\wedge \partial A, \partial A\>$ both vanish (so that $\int_{S^1} \<dA, \partial A\> d \theta = \int_{S^1} \<F, \partial A\> d \theta$), the expression for $dB$ becomes
$$
dB = \frac{i}{2\pi} \int_{S^1} \left\< F+ f\Phi, \partial A - [A, \Phi]  + a \partial \Phi -   d\Phi\right\> d\theta.\\
$$
If we define the covariant derivative of $\Phi$ by
$$
\nabla \Phi = d\Phi + [A, \Phi] - \partial A - a \partial \Phi,
$$
then one can easily check that it is (twisted) equivariant for the adjoint action. That is,
$$
\nabla \Phi (X (\gamma, \phi)) = \rho_{-\phi} \left(\ad(\gamma^{-1}) \nabla \Phi (X) \right),
$$
for any tangent vector $X.$ The same is true for the quantity $F + f \Phi,$ and so $H= dB$ descends to a form on $M.$ Thus we have proven

\begin{theorem}\label{T:LGxS^1string}
Let $P \to M$ be a principal $LG\rtimes_\rho S^1$-bundle and let $\Phi$ be a Higgs field for $P$ and $(A,a)$ be a connection for $P$ with curvature $(F,f).$ Then the real string class of $P$,  is represented in de Rham cohomology by
$$
-\frac{1}{4\pi^2}\int_{S^1} \langle F+ f\Phi, \nabla\Phi \rangle \,d\theta,
$$
where
$$
\nabla\Phi = d\Phi + [A,\Phi] - \partial A - a\partial \Phi.
$$
\end{theorem}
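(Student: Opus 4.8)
The plan is to realise the real string class of $P$ as the image in de Rham cohomology of the Dixmier--Douady class of the lifting bundle gerbe $(\tau^*(\widehat{LG}\rtimes_{\hat\rho}S^1), P)$ attached to the central extension of $\LGS$ constructed above, and then to write down an explicit $3$-curvature for it. By the machinery of Section~\ref{sec:liftingbg}, once a bundle gerbe connection and a curving $B\in\Omega^2(P)$ are in hand, the $3$-curvature $H$ is fixed by $dB=\pi^*H$, and $H/2\pi i$ represents the real Dixmier--Douady class. So the proof reduces to three steps: (i) choose a bundle gerbe connection on $\tau^*(\widehat{LG}\rtimes_{\hat\rho}S^1)$ and compute its curvature on $P^{[2]}$; (ii) solve $\d B$ equal to that curvature for a curving $B$; and (iii) compute $H=dB$ and check that it descends to $M$.

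For (i) and (ii) I would use the bundle gerbe connection $\tau^*\nu-\epsilon$ already assembled, where $\epsilon$ is the explicit $1$-form on $P^{[2]}$ solving $\d\epsilon=\tau^*\a$; its curvature is $\tau^*R-d\epsilon$. Feeding in the explicit pair $(R,\a)$ from Section~\ref{sec:Ralpha_LG}, the relation \eqref{E:tau and (A,a)} between the difference map $\tau$ and the connection pair $(A,a)$, and integrating by parts in the fibre coordinate, I expect $\tau^*R-d\epsilon$ to split into a $\d$-exact term plus a remainder built from $f$ and $\tau^*Z$. Bringing in a Higgs field $\Phi$ for $P$ (these always exist) and using its transformation law in the form \eqref{E:Phi and tau}, together with the curvature equivariance $\pi_1^*(F,f)=\ad(\tau^{-1})\pi_2^*(F,f)$, should absorb that remainder into a second $\d$-exact term. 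This is precisely what yields the curving
$$
B=\frac{i}{4\pi}\int_{S^1}\<A,\partial A\>-2\<F+\tfrac12 f\Phi,\Phi\>\,d\theta
$$
of Proposition~\ref{P:curving}, which I would take as the solution of step (ii).

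For (iii) I would differentiate $B$ directly. The essential input is the Bianchi identity for the semi-direct connection, $d(F,f)=([dA,A]-f\wedge\partial A+a\wedge\partial(dA),0)$, which lets me rewrite $dF$ and regroup; combined with the vanishing of $\int_{S^1}\<[A,A],\partial A\>\,d\theta$ and of $\<a\wedge\partial A,\partial A\>$, the fibre integral collapses to
$$
dB=-\frac{i}{2\pi}\int_{S^1}\<F+f\Phi,\nabla\Phi\>\,d\theta,\qquad \nabla\Phi=d\Phi+[A,\Phi]-\partial A-a\partial\Phi.
$$
It then remains to verify descent: both $F+f\Phi$ and $\nabla\Phi$ obey the twisted adjoint equivariance $X\mapsto\rho_{-\phi}(\ad(\gamma^{-1})(\,\cdot\,))$ under the $\LGS$-action, so by $\ad$-invariance of the inner product and the fibre integration the integrand is basic and $dB$ is pulled back from $M$. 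Dividing by $2\pi i$ then produces the asserted representative $-\frac{1}{4\pi^2}\int_{S^1}\<F+f\Phi,\nabla\Phi\>\,d\theta$.

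The main obstacle I anticipate is bookkeeping rather than anything conceptual, and it has two focal points. First, the Maurer--Cartan form on $\LGS$ is not $(\Theta,\mu)$ but carries a rotation, equalling $(\rho_{\phi^{-1}}(\Theta),\mu)$ at $(\gamma,\phi)$; keeping the $\rho$-twists consistent through \eqref{E:tau and (A,a)} and every subsequent fibre integration by parts is exactly where stray signs and factors of two tend to appear. Second, the descent check in step (iii) is the genuinely delicate point: it is the requirement that the integrand be equivariant that forces the correction terms $-\partial A-a\partial\Phi$ in $\nabla\Phi$ and ties the formula to the semi-direct, rather than the plain $LG$, structure. I would therefore treat the verification that $\nabla\Phi$ and $F+f\Phi$ transform correctly as the crux of the argument rather than a routine afterthought.
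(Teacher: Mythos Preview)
Your proposal is correct and follows essentially the same route as the paper: the paper likewise takes the curving $B$ of Proposition~\ref{P:curving}, differentiates it directly, invokes the Bianchi identity $dF=[F,A]-f\wedge\partial A+a\wedge\partial F$ together with the vanishing of $\int_{S^1}\<[A,A],\partial A\>\,d\theta$ and $\<a\wedge\partial A,\partial A\>$ to collapse the fibre integral to $-\tfrac{i}{2\pi}\int_{S^1}\<F+f\Phi,\nabla\Phi\>\,d\theta$, and then checks descent via the twisted equivariance of $F+f\Phi$ and $\nabla\Phi$. Your emphasis on the $\rho$-twist bookkeeping and on the descent verification as the genuine crux is apt and matches where the paper's computation expends its effort.
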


\begin{remark}
Recall from Remark \ref{rem:reduced} that if $P(S^1) \to Y$ is trivial then $P$ has a reduction to 
an $LG$-bundle. In such a case we choose a flat connection so that  $a = 0$ and  hence $f = 0$. 
The formulae we have derived for the real string class then reduces to that in \cite{Murray:2003} which 
we have given in Theorem \eqref{T:MS031}.
\end{remark}

\subsection{The real string class and the first Pontrjagyn class}\label{SS:string class and p1}

Recall that, in the case of $LG$-bundles, Theorem \ref{T:MS032} related the real string class to the Pontrjagyn class of the corresponding $G$-bundle. We have already seen that Higgs fields for $\LGS$-bundles play an integral role in the caloron correspondence for $G$-bundles over circle bundles and $\LGS$-bundles. In particular, a connection on the $G$-bundle and circle bundle corresponds to a connection and Higgs field on the $\LGS$-bundle. Therefore, the appearance of Higgs fields in our expression for the real string class of an $\LGS$-bundle suggests that there should be a result similar to Theorem \ref{T:MS032}. In particular, we have the following theorem

\begin{theorem}\label{T:LGxS^1Pont}
Let $P \to M$ be a principal $LG\rtimes_\rho S^1$-bundle and $\wt P \to Y \to M$ be the corresponding $G$-bundle over an $S^1$-bundle. Then the real string class of $P$ is given by the integration over the fibre of the first Pontrjagyn class of $\wt P$. That is,
$$
s(P) = \int_{S^1} p_1 (\wt P).
$$
\end{theorem}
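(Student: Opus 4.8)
The plan is to represent the first Pontrjagyn class $p_1(\wt P)$ by its Chern--Weil form, using the specific connection on $\wt P$ produced by the caloron correspondence, and then to carry out the fibre integration $\int_{S^1}$ over $Y \to M$ and compare the result directly with the de Rham representative of $s(P)$ obtained in Theorem \ref{T:LGxS^1string}. Since $p_1(\wt P)$ is a characteristic class it is independent of the choice of connection, so I am free to use the connection $\tilde A$ of Proposition \ref{P:caloron connection}, which is expressed entirely in terms of $(A,a)$ and $\Phi$ on $P$. Recall that $p_1(\wt P)$ is represented in de Rham cohomology by $-\frac{1}{8\pi^2}\<\tilde F \wedge \tilde F\>$, where $\tilde F$ is the curvature of $\tilde A$ and $\<\,,\>$ is the same normalised invariant inner product used throughout.

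The key computational step is to evaluate $\tilde F$. Writing $\beta = A(\theta) + \Phi(\theta)(a + d\theta)$, Proposition \ref{P:caloron connection} gives $\tilde A = \ad(g^{-1})\beta + \Theta$, so that $\tilde A$ is a gauge transform of the $\fg$-valued form $\beta$ by the fibre coordinate $g \in G$. Since $\beta$ has no $dg$-component, the standard gauge-covariance of curvature yields
$$
\tilde F = \ad(g^{-1})\left( d\beta + \tfrac{1}{2}[\beta, \beta] \right).
$$
The heart of the computation is then to expand $d\beta + \tfrac12[\beta,\beta]$, being careful that $d$ applied to a loop-evaluated form such as $A(\theta)$ produces both $(dA)(\theta)$ and a term $(\partial A)(\theta)\wedge d\theta$ coming from differentiation in the $S^1$-variable, and similarly for $\Phi(\theta)$. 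Using the expression $(F,f) = (dA + \tfrac12[A,A] - a\wedge\partial A, da)$ for the curvature and the definition $\nabla\Phi = d\Phi + [A,\Phi] - \partial A - a\,\partial\Phi$, I expect the result to assemble precisely into
$$
\tilde F = \ad(g^{-1})\left[ (F + f\Phi)(\theta) + \nabla\Phi(\theta) \wedge (a + d\theta) \right],
$$
the two $\ad$-equivariant combinations $F + f\Phi$ and $\nabla\Phi$ of Theorem \ref{T:LGxS^1string} emerging automatically.

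With this in hand the remainder is clean. By $\ad$-invariance of $\<\,,\>$ the factor $\ad(g^{-1})$ drops out of $\<\tilde F \wedge \tilde F\>$, and since $(a + d\theta)\wedge(a + d\theta) = 0$ the symmetric cross term survives while the $\nabla\Phi \wedge \nabla\Phi$ term vanishes, giving
$$
\<\tilde F \wedge \tilde F\> = \<(F + f\Phi)(\theta) \wedge (F + f\Phi)(\theta)\> + 2\<(F + f\Phi)(\theta), \nabla\Phi(\theta)\> \wedge (a + d\theta).
$$
Fibre integration over $Y \to M$ extracts the $d\theta$-component, identifying the $S^1$-fibre coordinate with the loop parameter $\theta$; this annihilates the first term and the $a$-part of the second, leaving $2\int_{S^1}\<F + f\Phi, \nabla\Phi\> \, d\theta$. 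Hence
$$
\int_{S^1} p_1(\wt P) = -\frac{1}{8\pi^2}\cdot 2\int_{S^1} \<F + f\Phi, \nabla\Phi\> \, d\theta = -\frac{1}{4\pi^2}\int_{S^1} \<F + f\Phi, \nabla\Phi\> \, d\theta,
$$
which is exactly the representative of $s(P)$ from Theorem \ref{T:LGxS^1string}; the factor of $2$ from the symmetric cross term is precisely what reconciles the constants $\tfrac{1}{8\pi^2}$ and $\tfrac{1}{4\pi^2}$.

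The \emph{main obstacle} is the curvature computation in the second step: one must track carefully the $\partial_\theta$-terms generated by differentiating the loop-evaluated forms $A(\theta)$, $a$ and $\Phi(\theta)$, and verify that, after using the structure equations for $(F,f)$ and the defining relation for $\Phi$ in Definition \ref{def:Higgsfield}, everything collapses into the combinations $F + f\Phi$ and $\nabla\Phi$. A secondary point requiring care is the normalisation bookkeeping: one must confirm that the Chern--Weil constant for $p_1$ together with the factor of $2$ from the cross term produces exactly $-\frac{1}{4\pi^2}$, and that the geometric fibre integration over $Y \to M$ genuinely coincides with $\int_{S^1}(\,\cdot\,)\,d\theta$ under the identification of the fibre circle with the loop parameter supplied by the caloron correspondence.
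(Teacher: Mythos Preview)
Your proposal is correct and follows essentially the same route as the paper's own proof: compute the curvature $\tilde F$ of the caloron connection from Proposition \ref{P:caloron connection}, obtain $\tilde F = \ad(g^{-1})\bigl((F+f\Phi) + \nabla\Phi\wedge(a+d\theta)\bigr)$, expand $\<\tilde F,\tilde F\>$ using $\ad$-invariance and $(a+d\theta)^2=0$, and then fibre-integrate to recover the de Rham representative of Theorem \ref{T:LGxS^1string}. The paper simply states the curvature formula without the gauge-covariance justification you sketch, so your outline is in fact slightly more detailed on that step.
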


\begin{proof}

We prove this by calculating the integral of the first Pontrjagyn class of $\wt P$  over the fibres of $Y \to M$.

Recall that the first Pontrjagyn class is given by
$$
p_1 = -\frac{1}{8\pi^2} \<\tilde{F},\tilde{F}\>,
$$
where $\tilde{F} = d\tilde{A} + \tfrac{1}{2} [\tilde{A},\tilde{A}]$ is the curvature of a connection $\tilde{A}$ on $\wt P$. Recall that in Proposition \ref{P:caloron connection} we gave a formula for the connection on $\wt P$ corresponding to the pair $((A, a), \Phi)$ on $P$. The curvature of this connection is given by
$$
\tilde{F} = \ad(g^{-1})\left( F + f\Phi + \nabla \Phi \wedge (a + d\theta)\right).
$$
So the first Pontrjagyn class is
$$
p_1 = -\frac{1}{8\pi^2} \Big( \left\<  F + f\Phi ,  F + f\Phi \right\> 
-2 \left\<  F + f\Phi , \nabla \Phi \wedge a \right\> 
-2 \left\< F + f\Phi , \nabla \Phi \right\> d\theta \Big).
$$
Thus, integrating $p_1$ over the fibre, we get
$$
-\frac{1}{4\pi^2}\int_{S^1} \langle F+ f\Phi, \nabla\Phi \rangle \, d\theta,
$$
which is the expression from Theorem \ref{T:LGxS^1string}.

\end{proof}

\end{document}